\newcommand{\stkout}[1]{\ifmmode\text{\sout{\ensuremath{#1}}}\else\sout{#1}\fi}
\numberwithin{equation}{section}
\newtheorem{prop}{Proposition}
\newtheorem{lemma}[prop]{Lemma}
\newtheorem{thm}[prop]{Theorem}
\numberwithin{prop}{section}
\newtheorem{defn}[prop]{Definition}
\theoremstyle{definition}
\newtheorem{rmk}[prop]{Remark}
\definecolor{c1}{rgb}{0.2,0.4,0.5}
\definecolor{c2}{rgb}{0.1,0.3,0.5}
\definecolor{c3}{rgb}{0.2,0.7,0.5}
\def \k {K\"ahler }
\newcommand{\oo}[1]{\overline{#1}}
\newcommand{\del}{\partial}
\newcommand{\bdel}{\bar{\partial}}
\newcommand{\gw}{\omega}
\newcommand{\ten}{\otimes}
\DeclareMathOperator{\Real}{Re}
\DeclareMathOperator{\Imaginary}{Im}
\DeclareMathOperator{\Hess}{Hess}
\begin{document}

\title[On a property of Bergman kernels when the K\"ahler potential is analytic]{On a property of Bergman kernels when the K\"ahler potential is analytic}

\begin{abstract} We provide a simple proof of a result of Rouby-Sj\"ostrand-Ngoc \cite{RSN} and Deleporte \cite{Deleporte}, which asserts that if the K\"ahler potential is real analytic then the Bergman kernel is an \textit{analytic kernel} meaning that its amplitude is an \textit{analytic symbol} and its phase is given by the polarization of the \k potential. This in particular shows that in the analytic case the Bergman kernel accepts an asymptotic expansion in a fixed neighborhood of the diagonal with an exponentially small remainder.  The proof we provide is based on a linear recursive formula of L. Charles \cite{Cha03} on the Bergman kernel coefficients which is similar to, but simpler than, the ones found in \cite{BBS}.
\end{abstract}


\author [Hezari]{Hamid Hezari}
\address{Department of Mathematics, University of California Irvine, Irvine, CA 92697, USA} \email{\href{mailto:hezari@uci.edu}{hezari@uci.edu}}

\author [Xu]{Hang Xu}
\address{Department of Mathematics, University of California San Diego, La Jolla, CA 92093 , USA}
\email{\href{mailto:h9xu@ucsd.edu}{h9xu@ucsd.edu}}

\maketitle

\section{Introduction}

Let $(L,h) \to M$ be a positive Hermitian holomorphic line bundle over a compact complex manifold of dimension $n$. The metric $h$ induces the \k form $\gw= -\tfrac{\sqrt{-1}}{2} \del \bdel \log(h)$ on $M$.  For $k$ in $\mathbb N$, let $H^0(M,L^k)$ denote the space of holomorphic sections of $L^k$. The {Bergman projection} is the orthogonal projection $\Pi_k: {L}^{2}(M,L^k) \to H^0(M,L^k)$ with respect to the natural inner product induced by the metric $h^k$ and the volume form $\frac{ \gw^n }{n!}$. The \emph{Bergman kernel} $K_k$, a section of $L^k\ten \bar{L}^k$, is the distribution kernel of $\Pi_k$.
Given $p \in M$,  let $(V, e_L)$ be a local trivialization of $L$ near $p$.   We write $| e_L |^2_{h}=e^{-\phi}$ and call $\phi$ a local \k potential. In the frame $ e_L^{k} \ten {\bar{e}_L^{k}}$, the Bergman kernel $K_k(x,y)$ is understood as a function on $V \times V$. 

In the smooth case, Zelditch \cite{Ze1} and Catlin \cite{Ca} proved that on the diagonal $x=y$, the Bergman kernel accepts a complete asymptotic expansion, as $k \to \infty$, of the form
\begin{equation}\label{ZC}
K_k(x,x)e^{-k\phi(x)}\sim \left(\frac{k}{\pi} \right)^n \left(b_0(x,\bar{x})+\frac{b_1(x,\bar{x})}{k}+\frac{b_2(x,\bar{x})}{k^2}+\cdots\right).
\end{equation}
Off-diagonal asymptotic expansion was given by Shiffman-Zelditch\cite{ShZe} and Charles \cite{Cha03}. For the analytic case, Zelditch conjectured \cite{Zeemail} that there is an off-diagonal asymptotic expansion for the Bergman kernel in a fixed neighborhood of the diagonal with an exponentially small remainder term. In particular as a result of this, one has
$$  \left | K_k(x, y) \right |_{h^k} \sim \left(\frac{k}{\pi} \right)^n  e^{- \frac{k D(x, y)}{2}}, \quad \text{as} \quad k \to \infty,$$ uniformly for all $x,y$ with $d(x, y) \leq \delta$ for some $\delta>0$. Here, $D(x, y)$ is Calabi's diastasis function (see \cite{Cal}) defined by
\begin{equation}\label{Diastatis} 
D(x, y)= \phi(x) + \phi(y) - \psi(x, \bar y) - \psi(y, \bar x),  
\end{equation}
which is controlled from above and below by $d^2(x, y)$, i.e. the square of the distance function induced by the \k metric.  This conjecture was proved first  by \cite{RSN} and shortly after a different proof was given by \cite{Deleporte}.

The main goal of this paper is to provide a new proof of this result. Roughly speaking, it states that the Bergman kernel is an \textit{analytic kernel} in the sense that it can be written as 
$$ K_k(x,y)=\left(\frac{k}{\pi} \right)^n e^{k\psi(x,\bar y)} b(x, \bar{y}, k),$$
where $\psi(x, \bar y)$ is an analytic phase function and $b(x, \bar{y}, k)$ is an \textit{analytic symbol}, meaning that $b(x,\bar{y}, k)$ has an asymptotic expansion (as $k \to \infty$) of the form 
\begin{equation} \label{analytic symbol} b(x,\bar{y}, k) \sim \sum_{m=0}^\infty \frac{b_m(x, \bar{y})}{k^m}, \end{equation} such that each $b_m(x, \bar{y})$ is holomorphic in a neighborhood of the diagonal $x=y$ and satisfies
$$ |b_m(x, \bar{y}) | \leq A^{m+1} m!, $$
for some $A>0$ independent of $m$. More precisely, we have: 
\begin{thm}\label{Main}
 Assume that the local \k potential $\phi$ is real analytic in $V$. Let $\psi(x, \bar{y})$ be the holomorphic extension of $\phi(x)$ near the diagonal obtained by \textit{polarization}, i.e., $\psi(x,\bar{y})$ is holomorphic in $x$, anti-holomorphic in $y$, and $\psi(x,\bar{x})=\phi(x)$.
Then there exist an open set $U \subset V$ containing $p$, an analytic symbol $b(x, \bar{y}, k)$ as in \eqref{analytic symbol} defined on $U \times U$, and positive constants $\delta$ and $C$, such that uniformly for any $x,y\in U$, we have
\begin{equation*}
K_k(x,y)=\left(\frac{k}{\pi} \right)^ne^{k\psi(x,\bar y)} \left (1+\sum _{m=1}^{ [\frac{k}{C}]}\frac{b_m(x, \bar y)}{k^m}\right )+e^{k\left(\frac{\phi(x)}{2}+\frac{\phi(y)}{2}\right)}e^{-\delta k}O(1). 
\end{equation*} 
\end{thm}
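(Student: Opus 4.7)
The approach is to take the full off-diagonal smooth expansion of Shiffman-Zelditch and Charles as the starting point, in which the coefficients $b_m(x,\bar y)$ are produced by the linear recursive formula from \cite{Cha03}, and then to upgrade the smooth expansion to an analytic one by proving the sharp factorial-type bound $|b_m(x,\bar y)| \leq A^{m+1} m!$ on a fixed neighborhood of the diagonal. Optimal truncation at $N = [k/C]$ will then convert this into the exponentially small remainder in the Theorem.

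\textbf{Step 1 (holomorphic extension of coefficients).} The first task is to verify that when $\phi$ is real analytic, each $b_m(x,\bar y)$ extends as a holomorphic-antiholomorphic function to a single, $m$-independent neighborhood $\Omega_0$ of the diagonal. This is immediate from the recursion, since each $b_{m+1}$ is obtained by applying a linear differential operator whose coefficients are polynomials in the derivatives of the polarization $\psi(x,\bar y)$ (which is itself holomorphic in $x$ and antiholomorphic in $y$ in a fixed neighborhood of the diagonal) to $b_m,b_{m-1},\ldots,b_0$. The holomorphic-antiholomorphic property therefore propagates automatically through the recursion.

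\textbf{Step 2 (factorial bounds via Cauchy estimates).} This is the heart of the argument. The plan is to fix a nested sequence of polydiscs $\Omega_0 \supset \Omega_1 \supset \Omega_2 \supset \cdots$ with $\mathrm{dist}(\Omega_{m+1},\partial\Omega_m) \sim 1/(m+1)^2$, so that $\bigcap_m \Omega_m$ still contains a fixed neighborhood $U$ of the diagonal. By induction one assumes $|b_j| \leq A^{j+1} j!$ on $\Omega_j$ for $j \leq m$; then Cauchy's inequality trades each derivative appearing in Charles's recursion for a factor of order $(m+1)^2$. Combining these derivative costs with the $j!$ factors from the induction hypothesis and using standard Stirling-type manipulations gives $|b_{m+1}| \leq A^{m+2}(m+1)!$ on $\Omega_{m+1}$, provided $A$ is chosen large enough from the outset to dominate all the universal constants coming from the recursion. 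Arranging the shrinking domains and verifying that the constant $A$ closes the induction is the main technical obstacle; any extra polynomial-in-$m$ loss beyond what Stirling absorbs would break the scheme, so it is important that Charles's formula is linear and involves a fixed (finite) number of derivatives per step.

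\textbf{Step 3 (optimal truncation).} With the bound $|b_m| \leq A^{m+1}m!$ on $U$, choose $C$ so large that $C > eA$, and set $N = [k/C]$. For the tail of the formal series, Stirling yields $A^{m+1} m!/k^m \lesssim A(Am/(ek))^m$, which at $m=N$ becomes $(A/(eC))^N = e^{-\delta k}$ for some $\delta > 0$. To match this with the smooth remainder, the same Cauchy-estimate technology from Step 2 is applied to the remainder $R_N$ in the Shiffman-Zelditch-Charles expansion, tracking how the constants $C_N$ depend on $N$ when $\phi$ is analytic: one obtains $|R_N| \leq B^{N+1} N! \, k^{n-N} e^{k\,\mathrm{Re}\,\psi(x,\bar y)}$, which at $N = [k/C]$ is again $e^{-\delta' k}\bigl(\frac{k}{\pi}\bigr)^n e^{k\,\mathrm{Re}\,\psi}$. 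Since $|e^{k\psi(x,\bar y)}| = e^{k(\phi(x)+\phi(y))/2} e^{-kD(x,y)/2} \leq e^{k(\phi(x)+\phi(y))/2}$, both remainders fit in the stated $e^{k(\phi(x)/2+\phi(y)/2)} e^{-\delta k} O(1)$ form, completing the proof.
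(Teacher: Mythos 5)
Your overall strategy is the same as the paper's: establish $|b_m(x,\bar y)|\le A^{m+1}m!$ on a fixed neighborhood of the diagonal (the paper's Theorem~\ref{MainLemma}) and then obtain the exponentially small remainder by optimal truncation at $N=[k/C]$. Step~1 is fine, and the truncation computation in Step~3 is the standard one. The problem is Step~2, which is the heart of the matter, and where your method diverges from the paper's and, as written, does not close.

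\textbf{Why the shrinking-domain Cauchy argument fails.} Charles's recursion does \emph{not} involve ``a fixed (finite) number of derivatives per step.'' In \eqref{recursive formula} the term with index $j$ hits $b_{m-j}$ with the antiholomorphic derivative $D_{\bar y}^{\beta_2}$, and the constraint is $|\beta_2|\le\nu-\mu=j$ (see \eqref{beta2 restriction}); so the order of differentiation applied to $b_{m-j}$ can be as large as $j$, and for $j\sim m$ this is of order $m$. Now run your scheme quantitatively. With $\operatorname{dist}(\Omega_{m+1},\partial\Omega_m)\sim(m+1)^{-2}$, the gap between $\Omega_{m-j}$ and $\Omega_m$ is $r\sim j/m^2$. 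The Cauchy estimate for a $j$th-order derivative over this gap costs a factor $j!\,r^{-j}\sim j!\,(m^2/j)^j$. Multiplying by the inductive bound $A^{m-j+1}(m-j)!$ and dividing by the target $A^{m+1}m!$, you must show
\[
\frac{j!\,(m^2/j)^j\,(m-j)!}{A^{j}\,m!}\;=\;\frac{(m^2/j)^j}{A^{j}\binom{m}{j}}\;\lesssim\;1.
\]
But $\binom{m}{j}\le (em/j)^j$, so the left side is at least $(m/(eA))^j$, which is unbounded in $m$. Thus the induction does not close. Moreover, this is not a defect of the particular exponent $2$: for \emph{any} summable sequence of gaps $\epsilon_m$ (so that $\bigcap\Omega_m$ still contains a ball) one necessarily has $\epsilon_m=o(1/m)$ along a subsequence, and the same computation shows the loss $\sim (m\epsilon_m)^{-j}$ is unbounded. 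The paper avoids this obstruction entirely: it keeps the domain $U$ \emph{fixed} and proves by induction a bound on \emph{all} derivatives,
\[
|D_w^{\gamma}D_{\bar w}^{\delta}b_m|\le\binom{m+|\delta|}{|\delta|}M^{m}(2C)^{|\gamma+\delta|}\gamma!\,\delta!\,m!,
\]
with the crucial extra binomial factor $\binom{m+|\delta|}{|\delta|}$ that couples $m$ with the derivative order $|\delta|$. It is exactly this coupling that absorbs the up-to-$j$ derivatives on $b_{m-j}$ (via $\binom{m-j+|\beta_2+\delta_4|}{|\beta_2+\delta_4|}\binom{|\beta_2+\delta_4|}{|\beta_2|}\le\binom{m+|\delta|}{|\delta|}\binom{m}{j}$). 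That device has no analogue in your proposal, and it is the reason Charles's recursion gives $m!$ rather than the $(m!)^2$ obtained in \cite{HLXanalytic}.

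\textbf{On Step 3.} Your assertion that ``the same Cauchy-estimate technology'' yields $|\mathcal R_N|\le B^{N+1}N!$ is not justified: it requires tracking, uniformly in $N$, how the stationary-phase remainder in the local reproducing identity depends on $N$. The paper explicitly calls the passage from Theorem~\ref{MainLemma} to Theorems~\ref{Main}, \ref{complete asymptotics} non-trivial and cites the argument carried out in \cite{HLXanalytic} (Remark~1.7); you need to either reproduce that argument or invoke it.
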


An equivalent formulation of the above theorem is:

\begin{thm}\label{complete asymptotics}
Given the same assumptions and notations as in the above theorem, there exist positive  constants $C$ and $\delta$, and an open set $U \subset V$ containing $p$, such that for all $k$ and $N\in \mathbb{N}$, we have for all $x,y\in U$ satisfying $d(x,y)\leq \delta$,
\begin{equation}\label{complete asymptotics eq}
K_k(x,y)=\left(\frac{k}{\pi} \right)^n e^{k\psi(x,\bar{y})}\left(1+\sum_{m=1}^{{N-1}}\frac{b_m(x,\bar{y})}{k^m}+\mathcal{R}_N(x, \bar y,k)\right),
\end{equation}
where 
\begin{equation*}
	\left|\mathcal{R}_N(x, \bar y ,k)\right|\leq {\frac{C^{N}N!}{k^N}}.
\end{equation*}
\end{thm}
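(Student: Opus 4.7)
The plan is to reduce Theorem \ref{complete asymptotics} to a single quantitative estimate on the coefficients $b_m(x,\bar y)$ of the off-diagonal asymptotic expansion. From the smooth theory of Charles \cite{Cha03} and Shiffman--Zelditch \cite{ShZe}, the Bergman kernel admits, on some neighborhood of the diagonal, an asymptotic expansion of the stated form with almost-holomorphic coefficients $b_m$. Under the analyticity hypothesis on $\phi$, the polarization $\psi(x,\bar y)$ is genuinely holomorphic in $x$ and anti-holomorphic in $y$ on a fixed neighborhood of the diagonal, and Charles's recursion (alluded to in the abstract) will produce holomorphic $b_m$'s there. The crux is to upgrade this to a uniform factorial-type bound
\begin{equation*}
\sup_{(x,y)\in U\times U}\left|b_m(x,\bar y)\right| \le A^{m+1}m!,
\end{equation*}
on a fixed neighborhood $U$ of $p$, with $A$ independent of $m$.

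Once this bound is in hand, I would argue as follows. For any $N$, split $K_k$ as the $N$-th partial sum of the asymptotic series plus the smooth off-diagonal remainder. The tail contribution $\sum_{m\ge N} b_m/k^m$ of the formal series is bounded by $\sum_{m\ge N} A^{m+1}m!/k^m$, which, for $N\le k/(2A)$, is dominated geometrically by its first term $\sim A^{N+1}N!/k^N$. For the smooth remainder, I would rerun the standard off-diagonal construction with all constants tracked in terms of the $b_m$-bounds — essentially, the same contour or $\bdel$-argument that proves the smooth expansion gives a constant of size $C^N N!$ at order $N$ once the coefficients themselves are controlled by $A^{m+1}m!$. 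For $N>k/(2A)$ the target bound $C^N N!/k^N$ already dominates the trivial estimate $|K_k(x,y)|e^{-k(\phi(x)+\phi(y))/2}\lesssim k^n$, so the inequality holds for all $N$ at the cost of enlarging $C$.

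The technical heart is therefore the factorial bound on $b_m$, which I would establish by induction using Charles's linear recursion. Schematically, this recursion expresses $b_m$ as a finite sum of holomorphic differential operators of bounded order applied to $b_0,\ldots,b_{m-1}$, with coefficients built from derivatives of $\psi$ (and therefore bounded on a fixed polydisc by analyticity of $\phi$). I would work on a decreasing family of polydiscs $U_{s_0}\supset\cdots\supset U_{s_m}\supset\cdots\supset U$ whose radii have gaps $s_{m-1}-s_m\sim 1/m^2$, so the limiting polydisc is nontrivial. By Cauchy's inequality, differentiating up to a fixed order on $U_{s_m}$ loses a factor $\lesssim m^{2r}$, which combined with the inductive hypothesis $\sup_{U_{s_{m-1}}}|b_{m-1}|\le A^m(m-1)!$ and the finitely many terms of the recursion gives a bound of the form $A'\cdot A^m m!$ on $U_{s_m}$, closing the induction provided $A$ is chosen large enough relative to the constants in Charles's recursion.

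The main obstacle I anticipate is precisely this combinatorial bookkeeping: one must check that the number of terms in the recursion, the orders of the differential operators involved, and the losses from shrinking polydiscs all balance so that the multiplier at step $m$ is only polynomial in $m$, consistent with a single extra factor of $m$ in the factorial. This is where the simplicity of Charles's recursion compared to the BBS recursion should be decisive, since the degree and the number of terms produced at each stage are controlled uniformly. Once the factorial bound is established, Theorem \ref{complete asymptotics} follows from the truncation argument outlined above, and Theorem \ref{Main} is then obtained by specializing $N=[k/C]$ for sufficiently large $C$ and invoking Stirling's formula to convert the $C^N N!/k^N$ bound into an exponentially small remainder $e^{-\delta k}$.
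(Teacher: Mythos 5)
Your overall strategy --- isolate the factorial bound $\sup_{U\times U}|b_m(x,\bar y)|\le A^{m+1}m!$ on a fixed neighborhood $U$, prove it by induction from Charles's recursion, and deduce Theorem~\ref{complete asymptotics} by truncation at $N\sim k/C$ --- matches the structure of the paper, where the coefficient estimate is exactly Theorem~\ref{MainLemma} and the deduction to Theorems~\ref{Main} and~\ref{complete asymptotics} is delegated to \cite{HLXanalytic}.

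The gap is in your proof of the coefficient bound. You model the recursion as applying differential operators ``of bounded order'' to the earlier $b_i$'s and then run a Nagumo-type induction on nested polydiscs with gaps $s_{m-1}-s_m\sim 1/m^2$. This loses a factor of $m$ per step. Already the $j=1$ part of \eqref{recursive formula} differentiates $b_{m-1}(w,\bar y)$ once in $\bar y$; a Cauchy estimate across a gap $\sim 1/m^2$ costs $\sim m^2$, while the factorial gain from $(m-1)!$ to $m!$ is only $m$. The induction thus yields $\sup|b_m|\lesssim(m!)^2$, not $m!$ --- precisely the weaker bound the authors got in \cite{HLXanalytic} from the \cite{BBS} recursion, which this paper was written to improve. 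Harmonic gaps $\sim 1/m$ would fix the single-step arithmetic but shrink the domain to a point; and for general $j$ the $\bar y$-derivatives hitting $b_{m-j}$ have order up to $j$ (not a bounded order), so the mismatch between the Cauchy loss $\sim(m^2/j)^j$ and the factorial gain $\sim\binom{m}{j}\sim(m/j)^j$ grows like $m^j$. No shrinkage scheme with a nontrivial limiting domain closes this.

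The paper never shrinks the domain. It instead proves, on the fixed ball $U$, the refined inductive statement \eqref{upper bound of bm},
\begin{equation*}
\left|D_w^{\gamma}D_{\bar{w}}^{\delta}b_m(w,\bar{w})\right| \leq\binom{m+|\delta|}{|\delta|} M^{m}(2C)^{|\gamma+\delta|}\gamma!\,\delta!\, m!,
\end{equation*}
controlling all mixed derivatives simultaneously. The binomial weight $\binom{m+|\delta|}{|\delta|}$ is the device that absorbs the order-$j$ antiholomorphic differentiation: a combinatorial identity reduces $\binom{m-j+|\beta_2+\delta_4|}{|\beta_2+\delta_4|}\binom{\beta_2+\delta_4}{\beta_2}$ to a quantity bounded by $\binom{m+|\delta|}{|\delta|}\binom{m}{j}$, and the identity $j!\binom{m}{j}(m-j)!=m!$ cancels the $j!$ prefactor of the recursion exactly, with no residual $m$-dependent loss. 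To make your argument work you would need to adopt this derivative-tracking hypothesis (or an equivalent weighted norm), not plain sup-norms on shrinking polydiscs.
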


A key step in our proof is to show that the holomorphic extensions $b_m(x,\bar{y})$ of the Bergman kernel coefficients $b_m(x, \bar x)$ appearing in the on-diagonal expansion \eqref{ZC} of Zelditch \cite{Ze1} and Catlin \cite{Ca} form an analytic symbol. More precisely, we show that:
\begin{thm}\label{MainLemma}
Assume the \k potential $\phi$ is real analytic in some neighborhood $V$ of $p$. Then, there exists a neighborhood $U\subset V$ of $p$, such that for any $m \in \mathbb N$ we have
\begin{equation*}
\|b_m(x,\bar{y})\|_{L^\infty(U\times U)} \leq A^{m+1}m!,
\end{equation*}
where $A$ is a constant independent of $m$. 
\end{thm}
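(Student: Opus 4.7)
The plan is to prove the bound by strong induction on $m$, using Charles's linear recursive formula (established earlier in the paper) together with Cauchy estimates on a nested family of polydisc neighborhoods of $(p,p)$ in $V \times V$.

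I would set up the domains as follows. Since $\phi$ is real analytic on $V$, its polarization $\psi(x,\bar y)$ is holomorphic on some polydisc $\Omega_0 \subset V\times V$ centered at $(p,p)$, and all coefficient functions appearing in Charles's recursion (each a polynomial expression in $\psi$ and its derivatives) are holomorphic on $\Omega_0$ and bounded there by a constant $M$. Next I would introduce a sequence of sub-polydiscs $\Omega_m \subset \Omega_0$ with radii $r_m$ chosen so that $r_m \downarrow r_\infty > 0$ and the differences $r_{m-1}-r_m$ decrease in a controlled way. On these polydiscs one has the classical Cauchy estimate
\begin{equation*}
\|\partial^\alpha f\|_{L^\infty(\Omega_m)} \leq \frac{|\alpha|!}{(r_{m-1}-r_m)^{|\alpha|}}\, \|f\|_{L^\infty(\Omega_{m-1})}.
\end{equation*}

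Charles's recursion expresses $b_m(x,\bar y)$ as a finite sum $\sum_{j\geq 1}\mathcal{L}_j b_{m-j}$, where each $\mathcal{L}_j$ is a linear differential operator in $(x,\bar y)$ with holomorphic coefficients on $\Omega_0$ bounded by $M$. Assuming the inductive hypothesis $\|b_j\|_{L^\infty(\Omega_j)} \leq A^{j+1}j!$ for $j < m$ and applying the Cauchy estimate above to each term, I would obtain
\begin{equation*}
\|b_m\|_{L^\infty(\Omega_m)} \leq \sum_{j\geq 1} M\cdot \frac{(\mathrm{ord}\,\mathcal{L}_j)!}{(r_{m-1}-r_m)^{\mathrm{ord}\,\mathcal{L}_j}}\, A^{m-j+1}(m-j)!,
\end{equation*}
and then verify, for $A$ sufficiently large, that this sum is bounded by $A^{m+1}m!$, thus closing the induction. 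The set $U$ in the theorem is then taken to be any polydisc of radius $\leq r_\infty$ centered at $p$, so that $U\times U \subset \Omega_m$ for every $m$.

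The main obstacle is precisely this combinatorial step: balancing the derivative loss in the Cauchy estimates against the shrinking rate $r_{m-1}-r_m$ so as to produce exactly $m!$ growth rather than $(m!)^s$ with $s > 1$. A naive application — fixed-order operators on polydiscs shrinking at rate $1/m^2$, say — yields bounds that are too weak, so the argument depends critically on the specific algebraic form of Charles's recursion: the orders of the $\mathcal{L}_j$, the decay of their coefficients in $j$, and any beneficial inverse factors built into the formula. This is exactly the structural feature that makes Charles's recursion more tractable for analytic-symbol bounds than the heavier BBS machinery, and verifying that all constants combine to give a single factor of $m!$ is the technical heart of the argument.
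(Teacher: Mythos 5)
Your high-level plan (Charles's recursion plus Cauchy estimates plus induction) is a natural one, but the specific architecture you propose -- a single nested family $\Omega_m \supset \Omega_{m+1} \supset \cdots$ with radii $r_m \downarrow r_\infty>0$ and the inductive hypothesis $\|b_j\|_{L^\infty(\Omega_j)} \leq A^{j+1}j!$ -- cannot be made to work, and the obstruction appears already at $j=1$. In Charles's recursion the $j=1$ term applies one anti-holomorphic derivative to $b_{m-1}$, so the Cauchy estimate going from $\Omega_{m-1}$ to $\Omega_m$ contributes a factor $1/(r_{m-1}-r_m)$. To turn $(m-1)!$ into $O(B\,m!)$ you then need $r_{m-1}-r_m \gtrsim 1/m$; but then $\sum_m (r_{m-1}-r_m)$ diverges, contradicting $r_m \downarrow r_\infty>0$. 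Any slower loss (e.g.\ $r_{m-1}-r_m \sim m^{-1-\varepsilon}$) leaves a factor $m^{\varepsilon}$ unaccounted for in the $j=1$ term, and these pile up into a super-factorial overshoot. So the difficulty you correctly flag at the end of your write-up is not just a matter of tracking constants: the fixed-sequence-of-polydiscs setup is structurally wrong. The standard way to repair an argument in this vein is to promote the induction to a two-parameter statement over a continuous scale of polydiscs, e.g.\ $\|b_m\|_{L^\infty(\Omega_r)} \leq A^{m+1}m!/(r_0-r)^{m}$ for every $r<r_0$, and then optimize the intermediate radius $r'$ with a $j$-dependent loss $r'-r \approx (r_0-r)\tfrac{j}{m}$; the factor $j!(m-j)!\,m^m/(m!\,j^j(m-j)^{m-j})$ is then $O(\sqrt{j})$ by Stirling and is summable against $(M/A)^j$.

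The paper's proof avoids shrinking domains altogether. It differentiates Charles's recursion to get a recursive inequality for $|D_w^{\gamma}D_{\bar w}^{\delta}b_m(w,\bar w)|$ on a \emph{fixed} neighborhood (Lemma~\ref{recursive inequality lemma}), and then proves by induction the weighted bound
\[
|D_w^{\gamma}D_{\bar w}^{\delta}b_m(w,\bar w)| \leq \binom{m+|\delta|}{|\delta|} M^{m}(2C)^{|\gamma+\delta|}\,\gamma!\,\delta!\,m!,
\]
see \eqref{upper bound of bm}. The combinatorial weight $\binom{m+|\delta|}{|\delta|}$ is what replaces the radius-loss bookkeeping: when the recursion transfers $|\beta_2|\le j$ anti-holomorphic derivatives onto $b_{m-j}$, the inherited factor $\binom{m-j+|\beta_2+\delta_4|}{|\beta_2+\delta_4|}$ is absorbed via the identity $\binom{m-j+|\beta_2+\delta_4|}{|\beta_2+\delta_4|}\binom{|\beta_2+\delta_4|}{|\beta_2|} = \binom{m-j+|\beta_2+\delta_4|}{|\delta_4|}\binom{m-j+|\beta_2|}{|\beta_2|} \leq \binom{m+|\delta|}{|\delta|}\binom{m}{j}$, and the $\binom{m}{j}$ then converts $(m-j)!\,j!$ into $m!$. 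The structural input you identify -- that at most $j$ derivatives land on $b_{m-j}$, coming from $|\beta_2|\le j$ in \eqref{beta2 restriction} -- is indeed the crucial feature of Charles's formula in both your approach and theirs, but the paper exploits it through this binomial identity rather than through domain shrinking. The $L^\infty$ bound on holomorphic extensions in Theorem~\ref{MainLemma} then follows by summing the Taylor series of $b_m$ off the diagonal.
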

Our proof is based on a linear recursive formula of L. Charles (See \cite{Cha00} and also equation (20) of \cite{Cha03}) for $b_m$. In our work with Z. Lu \cite{HLXanalytic}, we used a linear recursive formula of \cite{BBS} and only obtained the bounds of size $m!^2$ for the Bergman coefficients.  However  the formula of \cite{Cha00, Cha03} is simpler than \cite{BBS} as it does not involve any implicit functions and is very explicit. It is important to point out that another explicit recursive formula for Bergman coefficients was provided earlier by M. Engli\v s (See (3.10) of \cite{En}) based on the idempotent property of the Bergman projection (thus the recursive formula of \cite{En} is quadratic). The works of Loi \cite{Loi} and Xu \cite{Xu} on computation of the Bergman coefficients are based on this recursive formula of Engli\v s.   

There is a huge literature on Bergman kernels on compact complex manifolds. Before we conclude the introduction we list some related work that were not cited above. The book of Ma and Marinescu \cite{MaMaBook} contains an introduction to the asymptotic expansion of the Bergman kernel and its applications. See also the book review \cite{ZeBookReview} for more on the applications of Bergman kernels.  Two original and inspirational papers in the subject are \cite{Fe, BoSj}. For on-diagonal and near-diagonal asymptotic expansions see \cite{Ze1, Ca, DLM, LuSh, MaMaOff, HKSX}. For computations of the coefficients see \cite{Lu, En, LuTian, Loi, Xu, HLXanalytic}. Off-diagonal decay properties are studied in \cite{Del, MaMaAgmon, Ch, Ch2, Ch3, HX, Ze3}.  For the constant curvature case, which is a special case of analytic potentials, see \cite{Ber, Liu, HLXanalytic, Deleporte2}. Some applications of the Bergman kernel, and of the closely related Szeg\"o kernel, can be found in \cite{Do, BSZ, ShZe, YZ}. For Bergman kernels in the more general setting of symplectic manifolds see \cite{ShZe, MaMa}.

\section{The local reproducing property of Bergman kernel}\label{sec BBS}
Throughout this paper, we assume that $\phi$ is real analytic in a small open neighborhood $V\subset M$ of a given point $p$. Let $B^n(0,r)$ be the ball of radius $r$ in $\mathbb C^n$. We identify $p$ with $0\in \mathbb{C}^n$ and $V$ with the ball $B^n(0,3)\subset\mathbb{C}^n$ and denote $U=B^n(0, 1)$. Let $e_L$ be a local holomorphic frame of $L$ over $V$ as introduced in the introduction. 
For each positive integer $k$, we denote $H_{k\phi}(V)$ to be the inner product space of $L^2$-holomorphic functions on $U$ with respect to
\begin{equation*}
\left ( u, v \right)_{k\phi}=\int_V u \bar v \, e^{-k\phi}dV,
\end{equation*} 
where $dV=\frac{\omega^n}{n!}$ is the natural volume form induced by the \k form $\omega= \frac{\sqrt{-1}}{2} \partial \bar \partial \phi$. 
So the norm of $u \in H_{k\phi}(V)$ is given by
\begin{equation*}
\|u\|^2_{k\phi}=\int_V |u|^2e^{-k\phi}dV. 
\end{equation*}

Since $\phi(x)$ is analytic in $V=B^n(0, 3)$, without the loss of generality we assume that the radius of convergence of it power series in terms of $x$ and $\bar x$ is $3$.  By replacing $\phi(x)$ by $\phi(x) - \phi(0)$, we can assume that $\phi(0)=0$.  We  then denote $\psi(x,z)$ to be the holomorphic extension of $\phi(x)$ by replacing $\bar x$ with $z$. This procedure is called polarization. One can easily verify that $ \psi(x, z)$ satisfies the formal definition of holomorphic extension, namely
\begin{itemize}
	\item $\psi(x, z)$ is holomorphic in $B^n(0, 3) \times B^n(0, 3)$.
	\item $\psi(x, \bar x) = \phi(x)$. 
\end{itemize}   
Moreover, since $\phi(x)$ is real-valued, we have $ \oo{\psi(x, z)} = \psi( \bar z, \bar x)$.

Given $x\in U=B^n(0,1)$, let $\chi_x$ be a smooth cut-off function such that
\begin{equation*}
\chi_x(z)=
\begin{cases}
1 & z\in B^{n}(x,\frac{1}{2})\\
0 & z\notin B^{n}(x,\frac{3}{4}).
\end{cases}.
\end{equation*}

We denote the off-diagonal asymptotic expansion up to order $N$ by
\begin{align}\label{asymptotic expression}
K_k^{(N)}(x,y)=\left(\frac{k}{\pi}\right)^n e^{k\psi(x,\bar{y})}\left(1+b_1(x,\bar{y})+\frac{b_2(x,\bar{y})}{k^2}+\cdots+\frac{b_N(x,\bar{y})}{k^N}\right),
\end{align}
 where $b_m(x,z)$ are the holomorphic extensions of $b_m(x,\bar{x})$ in \eqref{ZC}. \cite{BBS} (see Proposition 2.5) proves that $K_k^{(N)}(x,y)$ is a local Bergman kernel mod $O\left(k^{n-N-1}\right)$ in the following sense.

\begin{prop}
	For any holomorphic function $u\in H_{k\phi}(V)$, we have that for any $x\in U$,
	\begin{align}\label{local reproducing of the asymptotic}
		u(x)=\int_V\chi_x(y)u(y)K_k^{(N)}(x,y)e^{-k\phi(y)}dV_y+O\left(k^{n-N-1}\right)e^{k\phi(x)/2}\|u\|_{k\phi}
	\end{align}
\end{prop}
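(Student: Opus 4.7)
The plan is to analyze the right-hand side of \eqref{local reproducing of the asymptotic} by the complex Laplace method, splitting the integration at some small fixed $r>0$ into a near-diagonal piece on $\{|y-x|<r\}$ (where $\chi_x\equiv 1$) and a far-diagonal piece on $\{r\leq|y-x|\leq 3/4\}\cap\supp\chi_x$. For the far piece, I would use the identity $2\Real\psi(x,\bar y)=\phi(x)+\phi(y)-D(x,y)$ implied by \eqref{Diastatis} together with the lower bound $D(x,y)\geq c|y-x|^2\geq cr^2$ near the diagonal. This yields $|e^{k\psi(x,\bar y)-k\phi(y)}|\leq e^{-c'k}e^{k(\phi(x)-\phi(y))/2}$, so the Cauchy--Schwarz inequality against $u(y)\,e^{-k\phi(y)/2}\in L^2$ bounds the far-diagonal contribution by $e^{-c'k}e^{k\phi(x)/2}\|u\|_{k\phi}$, which is exponentially small and so much better than the target $O(k^{n-N-1})$.

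For the near-diagonal piece I would change variables $y=x+w/\sqrt k$ and Taylor-expand the scaled phase $\Psi(w):=k\bigl(\psi(x,\overline{x+w/\sqrt k})-\phi(x+w/\sqrt k)\bigr)$. A direct computation using polarization shows that $\Psi(0)=0$ and that both the $\bar w$-linear and the $\bar w\bar w$-quadratic terms vanish (because $\phi(y)=\psi(y,\bar y)$ forces the $\bar y$-derivatives of $\psi(x,\bar y)$ and $\phi(y)$ to agree at $y=x$), leaving the mixed Hessian $-\phi_{i\bar j}(x)w_i\bar w_j$ as the Gaussian weight. The remaining phase terms (linear in $w$, quadratic in $ww$, and cubic or higher in $w/\sqrt k$) combine with the Taylor expansions of the holomorphic $u(y)$ and the amplitude $a_N(x,\bar y,k):=1+\sum_{m=1}^N b_m(x,\bar y)/k^m$ into a formal series of Gaussian moments of polynomials in $(w,\bar w)$. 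The essential point is that the coefficients $b_m(x,\bar y)$, obtained by polarizing the Zelditch--Catlin on-diagonal coefficients of \eqref{ZC}, are uniquely determined by Charles's linear recursion mentioned in the introduction, and that recursion is precisely the one forcing the above formal stationary-phase expansion to reproduce $u(x)$ modulo $O(k^{-N-1})$. Converting this into the weighted $L^2$ estimate $O(k^{n-N-1})e^{k\phi(x)/2}\|u\|_{k\phi}$ that appears on the right is done using the Bergman sub-mean-value inequality $|u(x)|^2 e^{-k\phi(x)}\lesssim k^n\|u\|_{k\phi}^2$ together with Cauchy's estimates on $k^{-1/2}$-scale balls to dominate the finitely many derivatives of $u$ that survive at each order of the expansion.

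The main obstacle, as I see it, is the bookkeeping: the phase $\psi(x,\bar y)-\phi(y)$ is not holomorphic in $y$, so to take advantage of the holomorphicity of $u$ one must either deform the $y$-contour into $\mathbb{C}^n$ (absorbing the $w$-linear part of $\Psi$) or pass to an almost-analytic extension in the spirit of Melin--Sjöstrand, and then verify that the recursion produced by the stationary-phase expansion coincides with the recursion satisfied by the polarized Zelditch--Catlin coefficients. Both steps are routine in principle, but the combinatorial matching, together with the careful passage from pointwise to weighted $L^2$ bounds, is the main technical burden.
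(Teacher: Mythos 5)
The paper does not prove this Proposition: it is stated as a black-box citation of Proposition 2.5 of Berman--Berndtsson--Sj\"ostrand \cite{BBS}, and the rest of Section 3 then \emph{uses} it to derive Charles's recursion \eqref{recursive formula}. Your sketch therefore attempts something the authors do not, and it is worth checking carefully whether it stands on its own.

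The far-diagonal half of your argument is sound and is the standard one: on $\{|y-x|\geq r\}\cap\supp\chi_x$ the diastasis identity $2\Real\psi(x,\bar y)=\phi(x)+\phi(y)-D(x,y)$ together with $D(x,y)\gtrsim|x-y|^2$ gives pointwise exponential smallness of $e^{k\psi(x,\bar y)-k\phi(y)}$ relative to $e^{k(\phi(x)+\phi(y))/2}$, and Cauchy--Schwarz then finishes it. The near-diagonal half, however, has a genuine circularity. You want to show that, for the specific $b_m(x,\bar y)$ which are by definition the polarizations of the Zelditch--Catlin on-diagonal coefficients in \eqref{ZC}, the stationary-phase expansion of the near-diagonal integral collapses to $u(x)+O(k^{-N-1})$. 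Your justification is that these $b_m$ satisfy Charles's linear recursion, and that this recursion is exactly the one the stationary-phase expansion demands. But in this paper (Theorem \ref{recursive formula theorem}) and in Charles's own work, that recursion is \emph{derived from} the local reproducing property \eqref{local reproducing of the asymptotic}; it is not an independent fact about the Zelditch--Catlin coefficients that you can call upon beforehand. You partly recognize this when you flag ``verify that the recursion produced by the stationary-phase expansion coincides with the recursion satisfied by the polarized Zelditch--Catlin coefficients'' as the main obstacle, but that verification is not routine --- it is precisely the content of the Proposition, and no independent handle on the recursion satisfied by the Zelditch--Catlin $b_m$ is offered. As it stands, the near-diagonal step assumes what it needs to prove.

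For comparison, the route in \cite{BBS} avoids this by going the other way: they \emph{construct} a local reproducing amplitude from scratch (their $b_m$ are produced by solving a first-order transport/$\bar\partial$-type equation order by order, so the reproducing identity holds by construction), and only afterwards identify those amplitude coefficients with the Zelditch--Catlin ones by comparing the two resulting on-diagonal expansions. If you want a self-contained proof in the spirit of your sketch, that is the order of operations to adopt: define the near-diagonal amplitude by whatever recursion the scaled stationary phase forces, prove the reproducing estimate for that amplitude, and then separately prove that its on-diagonal restriction must coincide with \eqref{ZC} (by uniqueness of asymptotic expansions of $K_k(x,x)e^{-k\phi(x)}$). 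Two smaller remarks. First, after the rescaling $y=x+w/\sqrt k$ the phase does \emph{not} become purely a Gaussian in the mixed variable: since $\psi(x,\bar y)$ is anti-holomorphic in $y$ while $\phi(y)$ is not, there is a $\sqrt k\,\phi_{y_j}(x)\,w_j$ term in $\Psi(w)$ that must be removed before any moment expansion; the paper does this (in its derivation of the recursion, not of this Proposition) by multiplying $u$ by $e^{k(f(y)-f(x))}$ with $f$ the holomorphic part of the Taylor series of $\phi$, which is cleaner than a contour deformation and worth adopting. Second, your route trades Hörmander's ready-made stationary phase estimate (which the paper uses with the phase $\widetilde\phi_w$) for a hands-on Bargmann--Fock rescaling; both are legitimate, but the rescaling route pushes considerably more combinatorics into the remainder estimate.
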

As we shall see, this local reproducing property is a key in obtaining a proof of Charles's recursive formula. 

\begin{rmk}\label{smooth BBS remark}
	In fact, \cite{BBS} (see Proposition 2.7) proves the above proposition for general smooth \k metrics. For the general case, $\psi(x,z)$ is any almost holomorphic extension of the \k potential $\phi$. That is, $D_{(\bar{x},\bar{z})}\psi(x,z)$ vanishes to infinite order when $z=\bar{x}$,  and $\psi(x,\bar{x})=\phi(x)$.
	Each $b_m(x,z)$ is an almost holomorphic extension of $b_m(x,\bar{x})$.
\end{rmk}

\section{The recursive formula of Charles}
In this section, our main goal is to prove the following recursive formula of L. Charles \cite{Cha03} on the Bergman kernel coefficients $b_m$. This formula is simpler than the formulas in \cite{BBS} because it does not contain any implicit functions obtained from change of coordinates. 

\begin{thm}\label{recursive formula theorem}
	For any $w\in U$, the Bergman kernel coefficient $b_m$ satisfies
	\begin{equation}\label{recursive formula}
	b_m(w,\bar{w})=-\frac{1}{v(w,\bar{w})}\sum_{j=1}^m\sum_{\nu-\mu=j}\sum_{2\nu\geq 3\mu}(-1)^{\mu}\frac{1}{\mu!\nu!}\Delta^{\nu}\left(S_w^{\mu}b_{m-j}(w,\bar{y})v(y,\bar{y})\right)\Big|_{y=w}.
	\end{equation} 
	Here $\Delta=\sum g^{i\bar{j}}(w)\partial_{y_i}\partial_{\oo{y_j}}$ is the Laplace operator whose coefficients are frozen at $w$, $S_w$ is the defined by 
	\begin{equation}\label{Sw}
		S_w(z)
		=\sum_{|\alpha|\geq 1, |\beta|\geq 1, |\alpha+\beta|\geq 3}\frac{D_z^{\alpha}D_{\bar{z}}^{\beta}\phi(w)}{\alpha!\beta!}(z-w)^{\alpha}\overline{(z-w)}^{\beta},
	\end{equation} 
	and $v=\det(g_{i\bar{j}})$ is the determinant of the \k metric.
\end{thm}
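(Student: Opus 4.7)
The plan is to apply the local reproducing property in Proposition~2.2 at $x=w$ with the holomorphic test function $u(y)=e^{k\psi(y,\bar w)}$ and extract the recursion by matching powers of $1/k$. Since $\overline{\psi(y,\bar w)}=\psi(w,\bar y)$ and $\phi(y)+\phi(w)-\psi(y,\bar w)-\psi(w,\bar y)=D(y,w)$ by \eqref{Diastatis}, the product $|u(y)|^2 e^{-k\phi(y)}$ equals $e^{k\phi(w)-kD(y,w)}$; because $D(y,w)\asymp d(y,w)^2$, a Laplace estimate gives $\|u\|_{k\phi}^2=O(k^{-n})\,e^{k\phi(w)}$. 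Using $u(w)=e^{k\phi(w)}$ on the left-hand side of \eqref{local reproducing of the asymptotic} and dividing through by $e^{k\phi(w)}$, the reproducing identity becomes
\begin{equation*}
1=\Bigl(\frac{k}{\pi}\Bigr)^{n}\!\int \chi_w(y)\, e^{-kD(y,w)}\Bigl(1+\sum_{l=1}^N \frac{b_l(w,\bar y)}{k^l}\Bigr) v(y,\bar y)\, dV_y \;+\; O\bigl(k^{n/2-N-1}\bigr).
\end{equation*}

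Next I would decompose $D(y,w)=\Phi_w(y)+S_w(y)$, where $\Phi_w(y)=g_{i\bar j}(w)(y_i-w_i)(\bar y_j-\bar w_j)$ is the quadratic part at $w$ and $S_w$ is the remainder defined by \eqref{Sw}; this splitting is forced by the Taylor expansion of $D(y,w)$ around $y=w$, since polarization cancels the pure holomorphic and pure antiholomorphic terms of $\phi$. Writing $e^{-kD(y,w)}=e^{-k\Phi_w(y)}\sum_{\mu\geq 0}\tfrac{(-kS_w(y))^\mu}{\mu!}$ and applying the frozen-metric Gaussian expansion
\begin{equation*}
\Bigl(\frac{k}{\pi}\Bigr)^{n}\!\int e^{-k\Phi_w(y)} G(y)\, dV_y \;\sim\; \frac{1}{v(w,\bar w)}\sum_{\nu\geq 0}\frac{1}{k^\nu\,\nu!}\,\Delta^\nu G\big|_{y=w}
\end{equation*}
term by term with $G=S_w^\mu\, b_l(w,\bar y)\, v(y,\bar y)$ (the cutoff $\chi_w$ contributes only exponentially small errors since $\Phi_w$ is positive definite), the right-hand side of the identity becomes a formal series in $1/k$ whose $k^{\mu-\nu-l}$ coefficient is $\tfrac{(-1)^\mu}{\mu!\,\nu!\,v(w,\bar w)}\,\Delta^\nu\bigl(S_w^\mu\, b_l(w,\bar y)\, v(y,\bar y)\bigr)\big|_{y=w}$.

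The last step is to read off the recursion. Each factor of $S_w$ vanishes at $y=w$ with at least one $(y-w)$ and at least one $(\bar y-\bar w)$ and total order at least three, so $S_w^\mu$ vanishes to order $\geq 3\mu$ there; since $\Delta^\nu$ supplies exactly $\nu$ holomorphic and $\nu$ antiholomorphic derivatives, $\Delta^\nu(S_w^\mu\,\cdot\,)\big|_{y=w}=0$ unless $2\nu\geq 3\mu$, which is precisely the constraint appearing in \eqref{recursive formula}. Setting $j=\nu-\mu$ and $m=j+l$, uniqueness of the asymptotic expansion (made rigorous by choosing $N$ large enough relative to $m$ so the error $O(k^{n/2-N-1})$ lies beyond $k^{-m}$) forces the coefficient of $k^{-m}$ on the right-hand side to equal $\delta_{m,0}$: the case $m=0$ merely reproduces $b_0(w,\bar w)=1$, while for $m\geq 1$ the unique $(\mu,\nu)=(0,0)$ contribution is exactly $b_m(w,\bar w)$, and transposing the remaining $j\geq 1$ terms gives \eqref{recursive formula}. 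The only genuine technical nuisance is verifying that truncating the $\mu$-series and replacing the cutoff $\chi_w$ by $1$ each introduce only $O(e^{-\delta k})$ errors, which is standard given the positive-definiteness of $\Phi_w$.
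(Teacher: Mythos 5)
Your proof is correct and follows the same overall strategy as the paper's (the BBS local reproducing property, splitting the phase into quadratic plus cubic-and-higher, and stationary phase), but it has two worthwhile streamlinings. First, by choosing $u(y)=e^{k\psi(y,\bar w)}$ at the outset, the phase of the integral becomes the diastasis $D(y,w)$ directly; the paper instead keeps $u$ arbitrary, multiplies it by $e^{k(f(y)-f(x))}$ to produce the auxiliary potential $\widetilde{\phi}_w$, and only at the end extracts the coefficient of $u(w)$ using the arbitrariness of $u$. Your test function makes $u$ disappear after normalization, so the ``extract the $u(w)$ coefficient'' step is never needed --- this is cleaner, and the two potentials agree since $\widetilde{\phi}_w(y)=D(y,w)$. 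Second, you derive the stationary-phase coefficients by hand, expanding $e^{-kS_w}$ and integrating against the frozen Gaussian in complex coordinates, rather than citing H\"ormander's Theorem~7.7.5 in real coordinates and then translating back via the real/complex Hessian lemma (Lemma~3.2). This buys you a bypass of that lemma, at the cost of having to justify the term-by-term expansion yourself rather than invoking a citation. Two small imprecisions are worth flagging: (i) the error from truncating the $\mu$-series is polynomially small in $k$ (each extra power of $S_w$ gains a factor $\sim k^{-1/2}$ on the Gaussian scale), not $O(e^{-\delta k})$; only the cutoff replacement contributes exponentially small errors. This does not harm the argument since you may truncate at any $\mu_0>2N$. (ii) You should note that, after possibly shrinking $V$, the diastasis satisfies $D(y,w)>0$ for $y\neq w$ on the relevant region, which is needed both for the Laplace estimate of $\|u\|_{k\phi}$ and for the cutoff replacement --- the paper establishes this as property~(i) of $\widetilde{\phi}_w$.
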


\begin{proof}[Proof of Theorem \ref{recursive formula}]
	By the explicit expression of the asymptotic expansion \eqref{asymptotic expression}, we can rewrite the local reproducing property \eqref{local reproducing of the asymptotic} into:
	\begin{equation}\label{local reproducing 1}
		u(x)=\left(\frac{k}{\pi}\right)^n\int_V \chi_x(y)u(y)e^{k(\psi(x,\bar{y})-\phi(y))}B^{(N)}(x,\bar{y})dV_y+O(k^{n-N-1})e^{k\phi(x)/2}\|u\|_{k\phi},
	\end{equation} 
	where
	\begin{equation*}
		B^{(N)}(x,\bar{y}):=1+\frac{b_1(x,\bar{y})}{k}+\cdots+\frac{b_N(x,\bar{y})}{k^N}.
	\end{equation*}
	For any $w\in B^n(0,1)$, we can write the Taylor expansion of $\phi$ at $w$ as
	\begin{equation*}
		\phi(z)=\sum_{\alpha,\beta\geq 0}\frac{D_z^{\alpha}D_{\bar{z}}^{\beta}\phi(w)}{\alpha!\beta!}(z-w)^{\alpha}\overline{(z-w)}^{\beta}, \quad \mbox{ for any } z\in B^n(w,1).
	\end{equation*}
	Let $f(z)$ be the holomorphic part (up to a constant) of this Taylor expansion.
	\begin{equation*}
		f(z)=\sum_{\alpha\geq 0}\frac{D_z^{\alpha}\phi(w)}{\alpha!}(z-w)^{\alpha}-\frac{\phi(w)}{2}.
	\end{equation*}
	Then we define a new \k potential $\widetilde{\phi}_w$ as
	\begin{equation}\label{definition of the phase}
		\widetilde{\phi}_w(z)=\phi(z)-f(z)-\overline{f(z)}=\sum_{|\alpha|, |\beta|\geq 1}\frac{D_z^{\alpha}D_{\bar{z}}^{\beta}\phi(w)}{\alpha!\beta!}(z-w)^{\alpha}\overline{(z-w)}^{\beta}.
	\end{equation}
	Clearly, $\widetilde{\phi}_w$ satisfies
	\begin{align}\label{taylor expansion of the phase}
		\widetilde{\phi}_w(z)=\sum_{i,j=1}^n\phi_{i \bar j}(w)(z_i-w_i)\overline{(z_j-w_j)}+O(|z-w|^3).
	\end{align}
	What's more, the holomorphic extension of $\widetilde{\phi}_w$ satisfies
	\begin{align*}
		\widetilde{\psi}_w(y,\bar{z})=\psi(y,\bar{z})-f(y)-\overline{f(z)}.
	\end{align*}
	Since the local reproducing property \eqref{local reproducing 1} holds for any homomorphic function $u\in H_{k\phi}(V)$, we can replace $u(y)$ by $u(y)e^{k(f(y)-f(x))}$ and obtain
	\begin{align*}
		u(x)=\left(\frac{k}{\pi}\right)^n\int_V\chi_x(y)u(y)e^{k(\widetilde{\psi}_w(x,\bar{y})-\widetilde{\phi}_w(y))}B^{(N)}(x,\bar{y})dV_y+O(k^{n-N-1})e^{k\widetilde{\phi}_w(x)/2}\|u\|_{k\widetilde{\phi}_w}.
	\end{align*}
	In particular, if we take $x=w$, then $\widetilde{\psi}_w(w,\bar{y})=0$ by \eqref{definition of the phase} and we have
	\begin{align}\label{local reproducing 2}
		u(w)=\left(\frac{k}{\pi}\right)^n\int_V\chi_w(y)u(y)e^{-k\widetilde{\phi}_w(y)}B^{(N)}(w,\bar{y})dV_y+O(k^{n-N-1})\|u\|_{k\widetilde{\phi}_w}.
	\end{align}
	We shall now focus on the integral 
	\begin{equation*}
		I(k)=\int_V\chi_w(y)u(y)e^{-k\widetilde{\phi}_w(y)}B^{(N)}(w,\bar{y})dV_y.
	\end{equation*}
	 We first note that 
	\begin{equation*}
		dV_y=\det \left(\phi_{i\bar{j}}(y)\right)dV_{E,y}:=v(y,\bar{y})dV_{E,y},
		\end{equation*}
	where $dV_E$ be the standard Euclidean volume form.
	Thus the integral $I(k)$ writes into
	\begin{equation*}
		I(k)=\int_{\mathbb{C}^n}\chi_w(y)u(y)e^{-k\widetilde{\phi}_w(y)}B^{(N)}(w,\bar{y})v(y,\bar{y})dV_{E,y}.
	\end{equation*}
	By \eqref{taylor expansion of the phase} and after shrinking the size of the neighborhood $V$ if necessary, it follows immediately that the phase function $\widetilde{\phi}_w$ satisfies
	\begin{itemize}
		\item[(i)] $\widetilde{\phi}_w(w)=0$ and $\widetilde{\phi}_w(y)> 0$ for any $y\in B^n(w,1)\setminus\{w\}$ ;
		\item[(ii)] $\widetilde{\phi}_w(y)$ has a unique critical point $y=w$ in $B^n(w,1)$.
	\end{itemize}
To prepare for the stationary phase lemma, we first need to study the determinant of the real Hessian matrix of $\widetilde{\phi}_w$.
\begin{lemma}\label{lemma Hessian determinate relation}
	Given an opens set $U\subset \mathbb{C}^n$ and a point $p\in U$. Let $h:U\rightarrow \mathbb{R}$ be a smooth real-valued function such that at some $p\in U$, $\partial_{z_i}\partial_{z_j}h(p)=\partial_{\oo{z_i}}\partial_{\oo{z_j}}h(p)=0$ for any $1\leq i,j\leq n$. If we denote the real and complex Hessian matrices of $h$ by $\Hess_{\mathbb{R}}(h)$ and $\Hess_{\mathbb{C}}(h)$ respectively, then
	\begin{equation}\label{Hessian determinant relation}
		\det\left(\Hess_{\mathbb{R}}(h)\right)(p)=4^n\left|\det\left(\Hess_{\mathbb{C}}(h)\right)\right|^2(p).
	\end{equation}	
\end{lemma}
\begin{proof}
	For each $1\leq i\leq n$, we write $z_i=x_i+\sqrt{-1}y_i$ for $x_i,y_i\in \mathbb{R}$. Let $D_x=(D_{x_1},D_{x_2},\cdots,D_{x_n})$ be the gradient vector and let $D_x^{\intercal}$ be its transpose. We use similar notations for $D_y$, $D_z$ and $D_{\bar{z}}$.  Then it follows immediately that
	\begin{align}\label{matrix P}
	\begin{pmatrix}
	D_x^{\intercal} \\
	D_y^{\intercal}
	\end{pmatrix}
	=\begin{pmatrix}
	I_n & I_n\\
	\sqrt{-1}I_n & -\sqrt{-1}I_n
	\end{pmatrix}
	\begin{pmatrix}
	D_z^{\intercal}\\
	D_{\bar{z}}^{\intercal}
	\end{pmatrix}
	:=P\begin{pmatrix}
	D_z^{\intercal}\\
	D_{\bar{z}}^{\intercal}
	\end{pmatrix},	
	\end{align}
	and 
	\begin{align*}
	\left(D_x,D_y\right)=\left(D_z,D_{\bar{z}}\right)P^{\intercal}.
	\end{align*}
	If we take the matrix multiplication of the above two identities, then we obtain
	\begin{align}\label{Hessian matrix relation}
	\Hess_{\mathbb{R}}(h)=PBP^{\intercal}.
	\end{align}
	where $B$ is a $(2n)\times (2n)$ matrix defined by
	\begin{equation*}
		B=\begin{pmatrix}
		\left(h_{z_iz_j}\right)_{1\leq i,j\leq n} & \left(h_{z_i\oo{z_j}}\right)_{1\leq i,j\leq n}\\
		\left(h_{\oo{z_i}z_j}\right)_{1\leq i,j\leq n} & \left(h_{\oo{z_i}\oo{z_j}}\right)_{1\leq i,j\leq n}
		\end{pmatrix}.
	\end{equation*}
	By taking the determinant and evaluating it at $p$, we have
	\begin{equation*}
		\det\left(\Hess_{\mathbb{R}}(h)\right)(p)=\det\left(B\right)\det\left(P\right)^2=4^n\left|\det\left(\Hess_{\mathbb{C}}(h)\right)\right|^2(p).
	\end{equation*}	
\end{proof}

By applying the Lemma \ref{lemma Hessian determinate relation} to the phase function $h=\widetilde{\phi}_w$ and using \eqref{taylor expansion of the phase}, we have
\begin{itemize}
	\item[(iii)] $\det\left(\Hess_{\mathbb{R}}(\widetilde{\phi}_w)\right)(w)=4^n\left|\det\left(\phi_{z\bar{z}}(w)\right)\right|^2>0.$
\end{itemize}

Since the phase function $\widetilde{\phi}_w$ satisfies properties (i), (ii) and (iii), we can apply the stationary phase lemma (see Theorem 7.7.5 in \cite{Ho2}) to the integral $I(k)$. Therefore,
\begin{equation*}
	I(k)=\left(\det\left(k\Hess_{\mathbb{R}}(\widetilde{\phi}_w)/2\pi\right)\right)^{-\frac{1}{2}}\sum_{j=0}^N\frac{1}{k^j}L_j\left(\chi_w(y)u(y)B^{(N)}(w,\bar{y})v(y,\bar{y})\right)\Big|_{y=w}+O\left(\frac{1}{k^{N+1}}\right)\|u\|_{C^{2N+2}}.
\end{equation*} 
Here $L_j$ is a differential operators of order $2j$ acting on a smooth function $g(y)$ at $w$, by
\begin{equation}
	L_j(g(y))=\sum_{\nu-\mu=j}\sum_{2\nu\geq 3\mu}2^{-\nu}(-1)^\mu\frac{1}{\mu!\nu!}\left<\Hess_{\mathbb{R}}(\widetilde{\phi}_w)^{-1}\Big|_w\left(D_{\Real y}, D_{\Imaginary y}\right), \left(D_{\Real y}, D_{\Imaginary y}\right)\right>^{\nu}\left(S_w^{\mu}g\right)(w),
\end{equation}
where the function $S_w$ is
\begin{align*}
	S_w(y,\bar{y})
	=&\widetilde{\phi}_w(y)-\frac{1}{2}\left<\Hess_{\mathbb{R}}\widetilde{\phi}_w(w)\cdot\left(\Real y,\Imaginary y\right),\left(\Real y,\Imaginary y\right)\right>
	\\
	=&\widetilde{\phi}_w(y)-\sum_{i,j=1}^n\phi_{i\bar{j}}(w)(y_i-w_i)\oo{(y_j-w_j)}.
\end{align*}

Let us simplify the above expression for $I(k)$. First, note that the quadratic differential operator in $L_j$ is simply a multiple of the (complex) Laplace operator $\Delta= \sum g^{i\bar{j}}(w)\partial_{y_i}\partial_{\bar{y_j}}$ whose coefficients are frozen at $w$.
\begin{lemma}
	\begin{equation}\label{Laplace operator relation}
		\left<\Hess_{\mathbb{R}}(\widetilde{\phi}_w)^{-1}\big|_w\left(D_{\Real y}, D_{\Imaginary y}\right), \left(D_{\Real y}, D_{\Imaginary y}\right)\right>=2\Delta.
	\end{equation}
\end{lemma}
\begin{proof}
	Recall the relation between the real and complex Hessian matrices \eqref{Hessian matrix relation} and the matrix $P$ in \eqref{matrix P}. Here we have
	\begin{equation*}
		\Hess_{\mathbb{R}}(\widetilde{\phi}_w)(w)
		=P\begin{pmatrix}
		0 & \left(\phi_{i\bar{j}}(w)\right)_{1\leq i,j\leq n}\\
		\left(\phi_{\bar{i}j}(w)\right)_{1\leq i.j\leq n} & 0
		\end{pmatrix}P^{\intercal}.
	\end{equation*}
	Therefore, by the change of coordinates as in \eqref{matrix P}, we have 
	\begin{align*}
		\left<\Hess_{\mathbb{R}}(\widetilde{\phi}_w)^{-1}\big|_w\left(D_{\Real y}, D_{\Imaginary y}\right), \left(D_{\Real y}, D_{\Imaginary y}\right)\right>
		=&\left(D_y,D_{\bar{y}}\right)P^{\intercal}\Hess_{\mathbb{R}}(\widetilde{\phi}_w)^{-1}\big|_w P\left(D_y,D_{\bar{y}}\right)^{\intercal}
		\\
		=&\left(D_y,D_{\bar{y}}\right)
		\begin{pmatrix}
		0 &\left(\phi_{i\bar{j}}(w)\right)\\
		\left(\phi_{\bar{i}j}(w)\right) & 0
		\end{pmatrix}^{-1}
		\left(D_y,D_{\bar{y}}\right)^{\intercal}
		\\
		=&2 \sum g^{i\bar{j}}(w)\partial_{i}\partial_{\bar{j}}.
	\end{align*}
	The last equality follows from the fact that $\phi$ is a local \k potential. 
\end{proof}

Second, by \eqref{Hessian determinant relation} we can simplify the determinant in $I(k)$:
\begin{align*}
	\det\left(k\Hess_{\mathbb{R}}(\widetilde{\phi}_w)/2\pi\right)
	=\left(\frac{k}{\pi}\right)^{2n}\left|\det\phi_{z\bar{z}}(w)\right|^2=\left(\frac{k}{\pi}\right)^{2n}v^2(y,\bar{y}).
\end{align*}
So we can rewrite $I(k)$ as
\begin{align*}
	I(k)=&\left(\frac{\pi}{k}\right)^nv^{-1}\sum_{j=0}^N\frac{1}{k^j}\sum_{\nu-\mu=j}\sum_{2\nu\geq 3\mu}(-1)^{\mu}\frac{1}{\mu!\nu!}\Delta^{\nu}\left(S_w^{\mu}\chi_w(y)u(y)B^{(N)}(w,\bar{y})v(y,\bar{y})\right)\Big|_{y=w}+O(k^{-N-1})\left\|u\right\|_{C^{2N+2}}
	\\
	=&\left(\frac{\pi}{k}\right)^nv^{-1}\sum_{m=0}^N\sum_{i+j=m}\frac{1}{k^m}\sum_{\nu-\mu=j}\sum_{2\nu\geq 3\mu}(-1)^{\mu}\frac{1}{\mu!\nu!}\Delta^{\nu}\left(S_w^{\mu}\chi_w(y)u(y)b_i(w,\bar{y})v(y,\bar{y})\right)\Big|_{y=w}+O(k^{-N-1})\left\|u\right\|_{C^{2N+2}}.
\end{align*}
We plug this back into \eqref{local reproducing 2} and obtain
\begin{align*}
	u(w)=v^{-1}\sum_{m=0}^N\sum_{i+j=m}\frac{1}{k^m}\sum_{\nu-\mu=j}\sum_{2\nu\geq 3\mu}(-1)^{\mu}&\frac{1}{\mu!\nu!}\Delta^{\nu}\left(S_w^{\mu}\chi_w(y)u(y)b_i(w,\bar{y})v(y,\bar{y})\right)\Big|_{y=w}
	\\
	&+O(k^{n-N-1})\left(\left\|u\right\|_{C^{2N+2}}+\|u\|_{k\widetilde{\phi}_w}\right).
\end{align*}
By comparing the coefficients of $k^{-m}$, we have that for any $m\geq 1$,
\begin{equation*}
	0=\sum_{i+j=m}\sum_{\nu-\mu=j}\sum_{2\nu\geq 3\mu}(-1)^{\mu}\frac{1}{\mu!\nu!}\Delta^{\nu}\left(S_w^{\mu}\chi_w(y)u(y)b_i(w,\bar{y})v(y,\bar{y})\right)\Big|_{y=w}
\end{equation*}
Since this identity works for any holomorphic function $u\in H_{k\phi}(V)$, for each $m$ the coefficient of $u(w)$ on the right side vanishes. Therefore, for any $m\geq 1$,
\begin{equation*}
	0=\sum_{i+j=m}\sum_{\nu-\mu=j}\sum_{2\nu\geq 3\mu}(-1)^{\mu}\frac{1}{\mu!\nu!}\Delta^{\nu}\left(S_w^{\mu}b_i(w,\bar{y})v(y,\bar{y})\right)\Big|_{y=w}.
\end{equation*}
Note that the term $b_m$ only shows up when $i=m$ and $j=0$. After moving $b_m$ to the other side we obtain the recursive formula:
\begin{equation*}
	b_m(w,\bar{w})=-\frac{1}{v}\sum_{j=1}^m\sum_{\nu-\mu=j}\sum_{2\nu\geq 3\mu}(-1)^{\mu}\frac{1}{\mu!\nu!}\Delta^{\nu}\left(S_w^{\mu}b_{m-j}(w,\bar{y})v(y,\bar{y})\right)\Big|_{y=w}.
\end{equation*} 
\end{proof}

\begin{rmk}
	The recursive formula \eqref{recursive formula} in fact holds for the case of smooth \k potentials, if $S_w$ is understood as a smooth function with the prescribed Taylor series at $w$ as in \eqref{Sw} and $b_{m-j}(w,\bar{y})$ in \eqref{recursive formula} is any almost holomorphic extension of $b_{m-j}(w,\bar{w})$.  The proof in the smooth case follows in a similar way as the case of real analytic \k potential and we only point out the difference here. When $\phi$ is a smooth \k potential on $V$, we can still write out the Taylor expansion of $\phi$ at $w\in B^n(0,1)$.
	\begin{align*}
		\phi(z)\sim\sum_{\alpha,\beta\geq 0}\frac{D_z^{\alpha}D_{\bar{z}}^{\beta}\phi(w)}{\alpha!\beta!}(z-w)^{\alpha}\overline{(z-w)}^{\beta}, \quad \mbox{ for any } z\in B^n(w,1).
	\end{align*}
	For a given integer $M\geq 2$, we can define the up to order $M$ holomorphic part in the Taylor expansion.
	\begin{equation*}
		f^{(M)}(z)=\sum_{0\leq |\alpha|\leq M}\frac{D_z^{\alpha}\phi(w)}{\alpha!}(z-w)^{\alpha}-\frac{\phi(w)}{2}.
	\end{equation*} 
	And further define a \k potential $\widetilde{\phi}_w^{(M)}$ as 
	\begin{equation*}
		\widetilde{\phi}_w^{(M)}=\phi(z)-f^{(M)}(z)-\oo{f^{(M)}(z)}.
	\end{equation*}
	As mentioned in Remark \ref{smooth BBS remark}, the results in Section \ref{sec BBS} are valid for general smooth \k potentials, when $\psi(x,z)$ and $b_j(x,z)$ are modified to be certain almost holomorphic extension of $\phi(x)$ and $b_j(x,\bar{x})$. Therefore, we can perform the same computations as in the real analytic case and obtain 
	\begin{equation}\label{recursive formula smooth}
	b_m(w,\bar{w})=-\frac{1}{v}\sum_{j=1}^m\sum_{\nu-\mu=j}\sum_{2\nu\geq 3\mu}(-1)^{\mu}\frac{1}{\mu!\nu!}\Delta^{\nu}\left((S_w^{(M)})^{\mu}b_{m-j}(w,\bar{y})v(y,\bar{y})\right)\Big|_{y=w},
	\end{equation}  
	where 
	\begin{equation*}
		S_w^{(M)}(y,\bar{y})=\widetilde{\phi}_w^{(M)}(y)-\sum_{i,j=1}^n\phi_{i\bar{j}}(w)(y_i-w_i)\oo{(y_j-w_j)}.
	\end{equation*}
	Note that \eqref{recursive formula smooth} holds for any $N\geq 2$ and the right hand side actually only depends on derivatives of $S_w^{(N)}$ and $b_{m-j}$ up to finite order at $(w,\bar{w})$. So we have the desired result for the smooth case.
\end{rmk}

\section{Estimates on Bergman Kernel Coefficients}\label{Sec ProofofMainLemma}
As before, we assume the \k metric is analytic in the neighborhood $V=B^n(0,3)$. We will estimate the growth rate of the Bergman kernel coefficients $b_m(x,z)$ as $m\rightarrow \infty$ for $x,z$ in the open set $U=B^n(0,1)$. Our goal is to prove Theorem \ref{MainLemma}. 

The key ingredient for the proof is the recursive formula \eqref{recursive formula}.
We will break the proof of Theorem \ref{MainLemma} into two steps. The first step is to derive from the recursive formula \eqref{recursive formula}, a recursive inequality on $|D^{\gamma}_wD^{\delta}_{\bar{w}}b_m(w,\bar{w})|$ for any multi-index $\gamma,\delta \in (\mathbb{Z}^{\geq 0})^n$ and a given $w\in U$. The second step is to estimate $|D^{\gamma}_wD^{\delta}_{\bar{w}}b_m(w,\bar{w})|$ by induction.

In the following, we will use Greek letters $\alpha,\beta,\gamma\cdots$, or a Greek letter with a lower index $\alpha_1, \alpha_2, \cdots$ to denote multi-indices in $(\mathbb{Z}^{\geq 0})^n$. On the other hand, we will use upper index to denote the different components in a multi-index, such as $\alpha=(\alpha^1,\alpha^2,\cdots, \alpha^n)$ or $\alpha_1=(\alpha_1^1,\alpha_1^2,\cdots, \alpha_1^n)$. We also use the following standard notations for multi-indicies. 

\begin{itemize}
\item $ \mathbbm 1 = (1, 1, \cdots, 1)$.
\item $|\alpha|=\alpha^1+\alpha^2+\cdots+\alpha^n$.
\item $ \alpha \leq \beta $ if $\alpha^1\leq \beta^1, \alpha^2\leq \beta^2,\cdots,\alpha^n\leq \beta^n$.
\item $\alpha< \beta$ if $ \alpha \leq \beta$ and $\alpha \ne \beta$.
\item $\binom{\alpha}{\beta}=\binom{\alpha^1}{\beta^1}\binom{\alpha^2}{\beta^2}\cdots \binom{\alpha^n}{\beta^n}$.
\item $ \alpha !=\alpha^1!\alpha^2!\cdots \alpha^n!$.
\item $\binom{\alpha}{\alpha_1,\alpha_2,\cdots,\alpha_k}=\frac{\alpha!}{\alpha_1!\alpha_2!\cdots\alpha_k!}$ for multi-indices $\alpha_1, \alpha_2,\cdots,\alpha_k$ and $\alpha$ such that $\alpha=\alpha_1+\alpha_2+\cdots+\alpha_k$.
\item $D_y^\alpha=D_{y_1}^{\alpha^1}D_{y_2}^{\alpha^2}\cdots D_{y_n}^{\alpha^n}$.
\end{itemize}

\begin{lemma} \label{recursive inequality lemma}
Suppose $\phi$ is real analytic in some neighborhood $V$ of $p$. Then there exist some positive constant $C$ and an open set $U\subset V$ containing $p$, such that for any non-negative integer $m$, multi-indices $\gamma,\delta\geq 0$ and any $w\in V$, we have
\begin{align}\label{recursive inequality}
\frac{\left|D_w^{\gamma}D_{\bar{w}}^{\delta}b_m\right|}{\gamma!\delta!}
\leq 
\sum_{j=1}^m\sum_{\nu=j}^{3j}j!
\sum_{\substack{\gamma_4\leq \gamma, \delta_4\leq \delta}}C^{|\gamma+\delta|-|\gamma_4+\delta_4|+j}
\sum_{|\alpha|=|\beta|=\nu} \sum_{\substack{\beta_2\leq \beta\\ |\beta_2|\leq j}}\frac{\left|D_w^{\gamma_4}D_{\bar{w}}^{\beta_2+\delta_4}b_{m-j}\right|}{\gamma_4!\beta_2!\delta_4!}.
\end{align}
\end{lemma}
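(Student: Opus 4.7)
The recursive formula of Theorem \ref{recursive formula theorem} can be rewritten as $b_m = -v^{-1} H_m$ where
$$
H_m(w,\bar w) = \sum_{j=1}^m \sum_{\substack{\nu-\mu=j\\ 0 \le \mu \le 2j}} \frac{(-1)^\mu}{\mu!\,\nu!}\, \Delta^\nu\bigl[S_w^\mu\, b_{m-j}(w,\bar y)\, v(y,\bar y)\bigr]\bigr|_{y=w},
$$
with $j \le \nu \le 3j$. Since $\phi$ is real-analytic on $V$, every accessory function entering this identity --- namely $v$, $v^{-1}$, the inverse-metric coefficients $g^{i\bar j}(w)$, and the partials $D_z^\alpha D_{\bar z}^\beta\phi$ --- is real-analytic in $(w,\bar w)$ on some neighborhood $U \subset V$ of $p$, hence satisfies a uniform Cauchy estimate $|D_w^\sigma D_{\bar w}^\tau h| \le C^{|\sigma+\tau|+1}\sigma!\,\tau!$ for a fixed $C$. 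The plan is to apply $D_w^\gamma D_{\bar w}^\delta$ to the identity above and use nested Leibniz plus these Cauchy bounds to recover \eqref{recursive inequality}.

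The first step is to expand $\Delta^\nu = \sum_{|\alpha|=|\beta|=\nu} c_{\alpha\beta}(w) D_y^\alpha D_{\bar y}^\beta$; counting ordered tuples of $\partial_{y_i}\partial_{\bar y_j}$ operators gives $|c_{\alpha\beta}(w)| \le C^\nu\, \nu!^2/(\alpha!\beta!)$. Next, the multinomial Leibniz rule expands $D_y^\alpha D_{\bar y}^\beta[S_w^\mu\, b_{m-j}(w,\bar y)\, v(y,\bar y)]$ as a sum over splittings $\alpha = \alpha_1+\alpha_2$ and $\beta = \beta_1+\beta_2+\beta_3$, with the three groups landing on $S_w^\mu$, $b_{m-j}(w,\bar y)$, and $v(y,\bar y)$ respectively. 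The crucial simplification is that $S_w$ vanishes at $y=w$ to order $\ge 1$ in $y$, $\ge 1$ in $\bar y$, and $\ge 3$ in total, so $[D_y^{\alpha_1}D_{\bar y}^{\beta_1} S_w^\mu]|_{y=w} = 0$ unless $|\alpha_1| \ge \mu$, $|\beta_1| \ge \mu$, and $|\alpha_1+\beta_1| \ge 3\mu$. Together with $|\alpha|=|\beta|=\nu=\mu+j$, these constraints force $|\alpha_2| \le j$, $|\beta_2+\beta_3| \le j$, and in particular $|\beta_2| \le j$ --- exactly as in \eqref{recursive inequality}.

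I then apply the outer $D_w^\gamma D_{\bar w}^\delta$, once more by Leibniz, distributing these derivatives among the five factors $v^{-1}$, $c_{\alpha\beta}(w)$, $[D_y^{\alpha_1} D_{\bar y}^{\beta_1} S_w^\mu]|_{y=w}$, $[D_{\bar y}^{\beta_2} b_{m-j}(w,\bar y)]|_{y=w}$, and $[D_y^{\alpha_2} D_{\bar y}^{\beta_3} v]|_{y=w}$. Four of these are real-analytic in $(w,\bar w)$ on $U$ and obey Cauchy bounds directly (for $[D_y^{\alpha_1} D_{\bar y}^{\beta_1} S_w^\mu]|_{y=w}$ this needs another Leibniz expansion of $S_w^\mu$ together with Cauchy on each resulting partial of $\phi$). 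Under the polarization convention, applying $D_w^{\gamma_4} D_{\bar w}^{\delta_4}$ to the remaining factor yields precisely $D_w^{\gamma_4} D_{\bar w}^{\beta_2+\delta_4} b_{m-j}(w,\bar w)$, where $\gamma_4 \le \gamma$ and $\delta_4 \le \delta$ are the parts of $\gamma,\delta$ that land on $b_{m-j}$ --- the indices appearing in \eqref{recursive inequality}.

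The hard part will be the combinatorial bookkeeping: one must verify that the various multinomial coefficients cancel the Cauchy factorials, leaving only $\gamma!\,\delta!$ on the left and only $\gamma_4!\,\beta_2!\,\delta_4!$ on the right. The prefactor $j!$ in \eqref{recursive inequality} arises from the identity $\nu!/\mu! = j!\,\binom{\mu+j}{j} \le 8^j\, j!$, valid because $\mu \le 2j$; the $8^j$ is absorbed into $C^j$. The remaining constants ($C^\nu$ from $c_{\alpha\beta}$, $C^\mu$ and $C^{|\alpha_1+\beta_1|}$ from $S_w^\mu$, the Cauchy constants from $v$ and $v^{-1}$) together with the Cauchy constants from the outer $\gamma,\delta$-derivatives aggregate into the single factor $C^{|\gamma+\delta|-|\gamma_4+\delta_4|+j}$, once one notes $\mu,\nu,|\alpha_2|,|\beta_3|\le 3j$ and that the outer derivatives not landing on $b_{m-j}$ total $|\gamma+\delta|-|\gamma_4+\delta_4|$.
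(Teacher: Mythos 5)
Your proposal is correct and follows essentially the same route as the paper: start from Charles's recursive formula, expand $\Delta^\nu$ into $D_y^\alpha D_{\bar y}^\beta$ with $|\alpha|=|\beta|=\nu$, use the inner Leibniz rule together with the vanishing orders of $S_w$ at $y=w$ to extract the crucial constraint $|\beta_2|\le j$, apply the outer $D_w^\gamma D_{\bar w}^\delta$ by Leibniz, and invoke uniform Cauchy estimates for the analytic accessory functions to collapse the combinatorics. The only cosmetic differences are that you split $S_w^\mu\, b_{m-j}\, v$ three ways while the paper groups $S_w^\mu v$ as one factor, and you keep the exact coefficient count $\nu!^2/(\alpha!\beta!)$ for $\Delta^\nu$ whereas the paper uses the cruder bound $\#A_{\alpha\beta\nu}\le n^{2\nu}$ together with $\alpha!\beta!\le\nu!^2$; both lead to the same $\nu!/\mu!\le 8^j j!$ prefactor and the same final inequality.
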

\begin{proof}
We first work on the expression $\Delta^{\nu}\left(S_w^{\mu}b_{m-j}(w,\bar{y})v(y,\bar{y})\right)\Big|_{y=w}$ in the recursive formula \eqref{recursive formula}. We expand the operator $\Delta^{\nu}$ and obtain
\begin{align*}
	\Delta^{\nu}
	=\left(\sum_{i, j} g^{i\bar{j}}(w)\partial_i\partial_{\bar{j}}\right)^{\nu}
	=&\sum_{1\leq i_1, j_1, i_2, j_2, \cdots, i_{\nu}, j_{\nu}\leq n}g^{i_1\bar{j}_1}g^{i_2\bar{j}_2}\cdots g^{i_{\nu}\bar{j}_{\nu}}\partial_{i_1}\partial_{i_2}\cdots\partial_{i_{\nu}}\partial_{\bar{j}_1}\partial_{\bar{j}_2}\cdots\partial_{\bar{j}_{\nu}}
	\\
	=&\sum_{|\alpha|=|\beta|=\nu}\sum_{\substack{e_{i_1}+e_{i_2}+\cdots+e_{i_v}=\alpha, \\ e_{j_1}+e_{j_2}+\cdots+e_{j_v}=\beta }} g^{i_1\bar{j}_1}g^{i_2\bar{j}_2}\cdots g^{i_{\nu}\bar{j}_{\nu}} D_y^{\alpha}D_{\bar{y}}^{\beta},
\end{align*}
where $\{e_j\}_{j=1}^n$ are the standard basis vectors in $\mathbb{R}^n$. For simplicity, we will denote
\begin{align*}
	I=(i_1, i_2, \cdots, i_{\nu}), \quad
	J=(j_1, j_2, \cdots, j_{\nu}),
\end{align*}
and
\begin{align*}
		g^{I\bar{J}}=g^{i_1\bar{j}_1}g^{i_2\bar{j}_2}\cdots g^{i_{\nu}\bar{j}_{\nu}}.
\end{align*}
For any multi-index $\alpha\in (\mathbb{Z}^{\geq 0})^n$ and non-negative integer $\nu$, we define the set 
\begin{equation*}
	A_{\alpha\beta\nu}=\left\{(I,J):
	\begin{array}{ll}
	e_{i_1}+e_{i_2}+\cdots+e_{i_v}=\alpha\\ e_{j_1}+e_{j_2}+\cdots+e_{j_v}=\beta
	\end{array} \right\}.
\end{equation*}
Clearly, since $1\leq i_k, j_k\leq n$ for every $k$, the sizes of these sets satisfy
\begin{equation}\label{cardinal of A}
	\# A_{\alpha\beta\nu}\leq n^{2\nu}.
\end{equation}
In terms of these new notations, $\Delta^{\nu}$ writes into:
\begin{equation*}
	\Delta^{\nu}
	=\sum_{|\alpha|=|\beta|=\nu}\sum_{A_{\alpha\beta\nu}} g^{I\bar{J}} D_y^{\alpha}D_{\bar{y}}^{\beta}.
\end{equation*}
Since $b_{m-j}(w,\bar{y})$ is antiholomorphic in $y$, 
\begin{align*}
	\Delta^{\nu}\left(S_w^{\mu}b_{m-j}(w,\bar{y})v(y,\bar{y})\right)\Big|_{y=w}
	=\sum_{|\alpha|=|\beta|=\nu}\sum_{A_{\alpha\beta\nu}} g^{I\bar{J}} \sum_{\beta_1+\beta_2=\beta}\binom{\beta}{\beta_1,\beta_2}D_y^{\alpha}D_{\bar{y}}^{\beta_1}\left(S_w^{\mu}v\right)D_{\bar{y}}^{\beta_2}b_{m-j}(w,\bar{w}).
\end{align*}
Substituting the above equation into the recursive formula \eqref{recursive formula}, we  obtain
\begin{align*}
	b_m=-\frac{1}{v}\sum_{j=1}^m\sum_{\nu-\mu=j}\sum_{2\nu\geq 3\mu}(-1)^{\mu}\frac{1}{\mu!\nu!}\sum_{|\alpha|=|\beta|=\nu}\sum_{A_{\alpha\beta\nu}} g^{I\bar{J}} \sum_{\beta_1+\beta_2=\beta}\binom{\beta}{\beta_1,\beta_2}D_y^{\alpha}D_{\bar{y}}^{\beta_1}\left(S_w^{\mu}v\right)D_{\bar{y}}^{\beta_2}b_{m-j}.
\end{align*} 
Note that all the holomorphic derivatives of $S_w$ vanish at $w$ by \eqref{Sw}. Thus, unless $D_y^{\alpha}D_{\bar{y}}^{\beta_1}\left(S_w^{\mu}v\right)=0$, we actually have $|\beta_1|\geq \mu$, whence
\begin{equation}\label{beta2 restriction}
	|\beta_2|=|\beta|-|\beta_1|\leq \nu-\mu=j.
\end{equation}

If we apply $D_w^{\gamma}D_{\bar{w}}^{\delta}$ on both sides and obtain a recursive formula for the derivatives of $b_m(w,\bar{w})$ as follows. 
\begin{align}\label{recursive formula on derivatives}
\begin{split}
	D_w^{\gamma}D_{\bar{w}}^{\delta}b_m
	=-\sum_{j=1}^m&\sum_{\nu-\mu=j}\sum_{2\nu\geq 3\mu}(-1)^{\mu}\frac{1}{\mu!\nu!}
	\sum_{\substack{\gamma_1+\gamma_2+\gamma_3+\gamma_4=\gamma \\ \delta_1+\delta_2+\delta_3+\delta_4=\delta}}\binom{\gamma}{\gamma_1,\gamma_2,\gamma_3,\gamma_4}\binom{\delta}{\delta_1,\delta_2,\delta_3,\delta_4}D_w^{\gamma_1}D_{\bar{w}}^{\delta_1}\left(\frac{1}{v}\right)
	\\
	&\cdot\sum_{|\alpha|=|\beta|=\nu}\sum_{A_{\alpha\beta\nu}} D_w^{\gamma_2}D_{\bar{w}}^{\delta_2}g^{I\bar{J}} \sum_{\beta_1+\beta_2=\beta}\binom{\beta}{\beta_1,\beta_2}D_w^{\gamma_3}D_{\bar{w}}^{\delta_3}D_y^{\alpha}D_{\bar{y}}^{\beta_1}\left(S_w^{\mu}v\right)D_w^{\gamma_4}D_{\bar{w}}^{\beta_2+\delta_4}b_{m-j},
\end{split}
\end{align}
where $D_w^{\gamma_3}D_{\bar{w}}^{\delta_3}D_y^{\alpha}D_{\bar{y}}^{\beta_1}\left(S_w^{\mu}v\right)=D_w^{\gamma_3}D_{\bar{w}}^{\delta_3}\left(D_y^{\alpha}D_{\bar{y}}^{\beta_1}\left(S_w^{\mu}v\right)(w,\bar{w})\right)$.

We need the following two lemmas to estimate some factors in the above recursive formula.
\begin{lemma}\label{factor 1}
	There exists a positive constant $C$ such that for any multi-indices $\alpha,\beta\geq 0$ and any $w\in B^n(0,1)$, we have
	\begin{align*}
		&\left|D_w^{\alpha}D_{\bar{w}}^{\beta}\left(\frac{1}{v}\right)(w,\bar{w})\right|\leq C^{|\alpha+\beta|+1}\alpha!\beta!,
		\\
		&\left|D_w^{\alpha}D_{\bar{w}}^{\beta}g^{I\bar{J}}(w)\right|\leq C^{|\alpha+\beta|+\nu}\alpha!\beta!.
	\end{align*}
\end{lemma}

\begin{proof}
	In this proof, we will use $C$ to denote a constant which only depends on the dimension $n$ and the \k potential $\phi$ on $V=B^n(0,3)$. The first inequality follows directly from the fact that $v=\det\left(\phi_{i \bar j}\right)$ is positive and real analytic on $B^n(0,3)$. Similarly, for any $1\leq i, j\leq n$ and any $w\in B^n(0,1)$, we also have
	\begin{equation}\label{g upper bound}
		\left|D_w^{\alpha}D_{\bar{w}}^{\beta}\left(g^{i\bar{j}}\right)(w)\right|\leq C^{|\alpha+\beta|+1}\alpha!\beta!.
	\end{equation}
	To prove the second inequality, we recall the notion of \emph{majorant}.
	
	\begin{defn}[Majorant]
	Suppose $f(x),g(x)$ are two smooth functions defined nearby $x=w\in \mathbb{C}^n$. We say that $g$ is a majorant of $f$ at $w$, denoted as $f<<_w g$, if for any multi-indices $\alpha,\beta\geq 0$, we have
	\begin{equation*}
		\left|D_x^{\alpha}D_{\bar{x}}^{\beta}f(w)\right|\leq D_x^{\alpha}D_{\bar{x}}^{\beta}g(w). 
	\end{equation*}
	\end{defn}
	
	Using the notion of majorant, \eqref{g upper bound} means that for any $1\leq i, j\leq n$ and any $w\in B^n(0,1)$, we have
	\begin{equation*}
		g^{i\bar{j}}(y)<<_w \frac{C}{\prod_{k=1}^n(1-C(y_k-w_k))(1-C(\oo{y_k-w_k}))}. 
	\end{equation*}
	Clearly, by taking products, we have
	\begin{align*}
		g^{I\bar{J}}&=g^{i_1\bar{j}_1}g^{i_2\bar{j}_2}\cdots g^{i_{\nu}\bar{j}_{\nu}}
		\\
		&<<_w\frac{C^{\nu}}{\prod_{k=1}^n(1-C(y_k-w_k))^{\nu}(1-C(\oo{y_k-w_k}))^{\nu}}
		\\
		&=C^{\nu}\prod_{k=1}^n\left(1+\binom{\nu}{1}C(y_k-w_k)+\binom{\nu+1}{2}C^2(y_k-w_k)^2+\cdots\right)
		\\
		&\quad\quad\quad\quad \cdot\left(1+\binom{\nu}{1}C(\oo{y_k-w_k})+\binom{\nu+1}{2}C^2(\oo{y_k-w_k})^2+\cdots\right) .
	\end{align*}
	Therefore,
	\begin{align*}
		\left|D_y^{\alpha}D_{\bar{y}}^{\beta}g^{I\bar{J}}(w)\right|\leq C^{\nu+|\alpha+\beta|}\binom{(\nu-1)\mathbbm{1}+\alpha}{\alpha}\binom{(\nu-1)\mathbbm{1}+\beta}{\beta}, 
	\end{align*}
	where $\mathbbm{1}=(1,1,\cdots,1)\in \mathbb{R}^n$.
	The second inequality follows by the fact that for any index $\alpha$, 
	\begin{equation*}
		\binom{(\nu-1)\mathbbm{1}+\alpha}{\alpha}\leq 2^{\nu n+|\alpha|}.
	\end{equation*}
\end{proof}

\begin{lemma}\label{factor 2}
	There exists a positive constant $C$ such that for any multi-indices $\alpha,\beta,\gamma,\delta\geq 0$ and any $w\in B^n(0,1)$, we have
	\begin{align*}
	\left|D_w^{\gamma}D_{\bar{w}}^{\delta}\left(D_y^{\alpha}D_{\bar{y}}^{\beta}\left(S_w^{\mu}v\right)(w,\bar{w})\right)\right|\left(w,\bar{w}\right)\leq C^{|\alpha+\beta+\gamma+\delta|+\mu+1}\alpha!\beta!\gamma!\delta!.
	\end{align*}
\end{lemma}

\begin{proof}
	By a straightforward computation, we have
	\begin{align*}
		D_y^{\alpha}D_{\bar{y}}^{\beta}S_w^{\mu}(w,\bar{w})
		=\sum_{\substack{\alpha_1+\alpha_2+\cdots+\alpha_{\mu}=\alpha\\
		\beta_1+\beta_2+\cdots+\beta_{\mu}=\beta}}\binom{\alpha}{\alpha_1,\alpha_2,\cdots,\alpha_{\mu}}\binom{\beta}{\beta_1,\beta_2,\cdots,\beta_{\mu}}\prod_{k=1}^{\mu}D_y^{\alpha_k}D_{\bar{y}}^{\beta_k}S_w(w,\bar{w}).
	\end{align*}
	Recall that $S_w(y)$ defined in \eqref{Sw} contains no purely holomorphic or purely antihomomorphic terms in its Taylor series at $w$ and vanishes of third order at $w$. If we define 
	\begin{align*}
		B_{\alpha\beta\mu}=\left\{\{\alpha_j\}_{j=1}^{\mu},\{\beta_j\}_{j=1}^{\mu}: \begin{array}{ll}
		\alpha_1+\alpha_2+\cdots+\alpha_{\mu}=\alpha,\\
		\beta_1+\beta_2+\cdots+\beta_{\mu}=\beta,\\
		|\alpha_j|>0, |\beta_j|>0,
		|\alpha_j+\beta_j|\geq 3
		\end{array}\right\},
	\end{align*} 
	then
	\begin{align*}
		D_y^{\alpha}D_{\bar{y}}^{\beta}S_w^{\mu}(w,\bar{w})
		=\sum_{B_{\alpha\beta\mu}}\binom{\alpha}{\alpha_1,\alpha_2,\cdots,\alpha_{\mu}}\binom{\beta}{\beta_1,\beta_2,\cdots,\beta_{\mu}}\prod_{k=1}^{\mu}D_y^{\alpha_k}D_{\bar{y}}^{\beta_k}\phi(w).
	\end{align*}
	By taking more derivatives, we have
	\begin{align*}
		D_w^{\gamma}D_{\bar{w}}^{\delta}&\left(D_y^{\alpha}D_{\bar{y}}^{\beta}S_w^{\mu}(w,\bar{w})\right)
		\\
		&=\sum_{C_{\gamma\delta\mu}}\sum_{B_{\alpha\beta\mu}}\binom{\gamma}{\gamma_1 \cdots \gamma_{\mu}}\binom{\delta}{\delta_1 \cdots \delta_{\mu}}\binom{\alpha}{\alpha_1 \cdots \alpha_{\mu}}\binom{\beta}{\beta_1 \cdots \beta_{\mu}}\prod_{k=1}^{\mu}D_y^{\alpha_k+\gamma_k}D_{\bar{y}}^{\beta_k+\delta_k}\phi(w),
	\end{align*}
	where
	\begin{align*}
	C_{\gamma\delta\mu}=\left\{\{\gamma_j\}_{j=1}^{\mu},\{\delta_j\}_{j=1}^{\mu}: \begin{array}{ll}
	\gamma_1+\gamma_2+\cdots+\gamma_{\mu}=\gamma,\\
	\delta_1+\delta_2+\cdots+\delta_{\mu}=\delta	\end{array}\right\},
	\end{align*} 
	Since $\phi$ is real analytic on $B^n(0,3)$, there exists $C>0$, such that for any $\alpha,\beta,\gamma,\delta$ and any $w\in B^n(0,1)$, we have
	\begin{align*}
	&\left|D_w^{\gamma}D_{\bar{w}}^{\delta}\left(D_y^{\alpha}D_{\bar{y}}^{\beta}S_w^{\mu}(w,\bar{w})\right)\right|
	\\
	&\quad \leq \sum_{C_{\gamma\delta\mu}}\sum_{B_{\alpha\beta\mu}}\binom{\gamma}{\gamma_1 \cdots \gamma_{\mu}}\binom{\delta}{\delta_1 \cdots \delta_{\mu}}\binom{\alpha}{\alpha_1 \cdots \alpha_{\mu}}\binom{\beta}{\beta_1 \cdots \beta_{\mu}} C^{|\alpha+\beta+\gamma+\delta|+\mu}\prod_{k=1}^{\mu}(\alpha_k+\gamma_k)!(\beta_k+\delta_k)!
	\\
	&\quad \leq C^{|\alpha+\beta+\gamma+\delta|+\mu}\alpha!\beta!\gamma!\delta!\sum_{C_{\gamma\delta\mu}}\sum_{B_{\alpha\beta\mu}}2^{|\alpha+\beta+\gamma+\delta|}.
	\end{align*}
	The second inequality follows from the following elementary facts
	\begin{align*}
		\binom{\alpha_k+\gamma_k}{\alpha_k , \gamma_k}\leq 2^{|\alpha_k+\gamma_k|}, \quad \binom{\beta_k+\delta_k}{\beta_k , \delta_k}\leq 2^{|\beta_k+\delta_k|}.
	\end{align*}
	We can estimate the size of the set $B_{\alpha\beta\mu}$ as:
	\begin{align*}
		\# B_{\alpha\beta\mu}\leq \#C_{\alpha\beta\mu}\leq \binom{\alpha+(\mu-1) \mathbbm{1}}{(\mu-1)\mathbbm{1}}\binom{\beta+(\mu-1) \mathbbm{1}}{(\mu-1)\mathbbm{1}}\leq 2^{|\alpha+\beta|+2n\mu}.
	\end{align*} 
	Therefore,
	\begin{align*}
	\left|D_w^{\gamma}D_{\bar{w}}^{\delta}\left(D_y^{\alpha}D_{\bar{y}}^{\beta}S_w^{\mu}(w,\bar{w})\right)\right|
	\leq 2^{(4n+1)\mu}(4C)^{|\alpha+\beta+\gamma+\delta|}\alpha!\beta!\gamma!\delta!.
	\end{align*}
	The lemma follows by noting $v$ is real analytic in $B^n(0,3)$.
\end{proof}

Now we apply Lemma \ref{factor 1}, Lemma \ref{factor 2} and \eqref{beta2 restriction} to the recursive formula \eqref{recursive formula on derivatives} on derivatives of $b_m(w,\bar{w})$ and obtain that
\begin{align*}
	\left|D_w^{\gamma}D_{\bar{w}}^{\delta}b_m\right|
	\leq 
	&\sum_{j=1}^m\sum_{\nu-\mu=j}\sum_{2\nu\geq 3\mu}\frac{1}{\mu!\nu!}
	\sum_{\substack{\gamma_1+\gamma_2+\gamma_3+\gamma_4=\gamma \\ \delta_1+\delta_2+\delta_3+\delta_4=\delta}}\binom{\gamma}{\gamma_1,\gamma_2,\gamma_3,\gamma_4}\binom{\delta}{\delta_1,\delta_2,\delta_3,\delta_4}C^{|\gamma_1+\delta_1|+1}\gamma_1!\delta_1!
	\\
	&\sum_{|\alpha|=|\beta|=\nu}\sum_{A_{\alpha\beta\nu}} C^{|\gamma_2+\delta_2|+\nu}\gamma_2!\delta_2! \sum_{\substack{\beta_1+\beta_2=\beta\\ |\beta_2|\leq j}}\binom{\beta}{\beta_1,\beta_2}C^{|\gamma_3+\delta_3+\alpha+\beta_1|+\mu+1}\alpha!\beta_1!\gamma_3!\delta_3!\left|D_w^{\gamma_4}D_{\bar{w}}^{\beta_2+\delta_4}b_{m-j}\right|.
\end{align*}
After simplification, we have
\begin{align*}
&\frac{\left|D_w^{\gamma}D_{\bar{w}}^{\delta}b_m\right|}{\gamma!\delta!}\leq
\\
&\quad  \quad 
\sum_{j=1}^m\sum_{\nu-\mu=j}\sum_{2\nu\geq 3\mu}\frac{\nu!}{\mu!}
\sum_{\substack{\gamma_1+\gamma_2+\gamma_3+\gamma_4=\gamma \\ \delta_1+\delta_2+\delta_3+\delta_4=\delta}}C^{|\gamma+\delta|-|\gamma_4+\delta_4|+\mu+3\nu+2}
\sum_{|\alpha|=|\beta|=\nu}\sum_{A_{\alpha\beta\nu}}  \sum_{\substack{\beta_1+\beta_2=\beta\\ |\beta_2|\leq j}}\frac{\left|D_w^{\gamma_4}D_{\bar{w}}^{\beta_2+\delta_4}b_{m-j}\right|}{\gamma_4!\beta_2!\delta_4!}.
\end{align*}
By using the fact $\nu!/\mu!\leq 2^{\nu}j!$ and the upper bound \eqref{cardinal of A} on the size of $A_{\alpha\beta\mu}$, after renaming the constant $2nC^5$ to $C$, we obtain
\begin{align*}
\frac{\left|D_w^{\gamma}D_{\bar{w}}^{\delta}b_m\right|}{\gamma!\delta!}
 \leq 
\sum_{j=1}^m\sum_{\nu=j}^{3j}j!
\sum_{\substack{\gamma_1+\gamma_2+\gamma_3+\gamma_4=\gamma \\ \delta_1+\delta_2+\delta_3+\delta_4=\delta}}C^{|\gamma+\delta|-|\gamma_4+\delta_4|+\nu}
\sum_{|\alpha|=|\beta|=\nu} \sum_{\substack{\beta_1+\beta_2=\beta\\ |\beta_2|\leq j}}\frac{\left|D_w^{\gamma_4}D_{\bar{w}}^{\beta_2+\delta_4}b_{m-j}\right|}{\gamma_4!\beta_2!\delta_4!}.
\end{align*} 
Since for a given index $\gamma_4\leq \gamma$, 
\begin{align*}
	\#\{(\gamma_1,\gamma_2,\gamma_3): \gamma_1+\gamma_2+\gamma_3=\gamma-\gamma_4\}= \binom{\gamma-\gamma_4+2\cdot\mathbbm{1}}{2\cdot \mathbbm{1}}\leq 2^{|\gamma-\gamma_4|+2n},
\end{align*}
 the above recursive inequality can be further simplified into:
\begin{align*}
\frac{\left|D_w^{\gamma}D_{\bar{w}}^{\delta}b_m\right|}{\gamma!\delta!}
\leq 
\sum_{j=1}^m\sum_{\nu=j}^{3j}j!
\sum_{\substack{\gamma_4\leq \gamma, \delta_4\leq \delta}}(2C)^{|\gamma+\delta|-|\gamma_4+\delta_4|+3j+4n}
\sum_{|\alpha|=|\beta|=\nu} \sum_{\substack{\beta_2\leq \beta\\ |\beta_2|\leq j}}\frac{\left|D_w^{\gamma_4}D_{\bar{w}}^{\beta_2+\delta_4}b_{m-j}\right|}{\gamma_4!\beta_2!\delta_4!}.
\end{align*}
The result follows by renaming $(2C)^{4n+3}$ to $C$.
\end{proof}

Next we use this lemma to prove Theorem \ref{MainLemma}.

\subsection{Proof of Theorem \ref{MainLemma}}
We will argue by induction on $m$ and prove that for any integer $m\geq 0$, any multi-indices $\gamma,\delta\geq 0$ and any $w\in U=B^n(0,1)$,
\begin{equation}\label{upper bound of bm}
\left|D_w^{\gamma}D_{\bar{w}}^{\delta}b_m(w,\bar{w})\right| \leq\binom{m+|\delta|}{|\delta|} M^{m}(2C)^{|\gamma+\delta|}\gamma!\delta!m!,
\end{equation}
where $C$ is the same constant which appears on the right hand side of \eqref{recursive inequality} and $M$ is a bigger constant to be selected later. Note that the estimates of Theorem \ref{MainLemma} follow easily from \eqref{upper bound of bm} after writing  the Taylor series of $b_m(x, \bar{y})$ on the diagonal $y=x$ and noting that by \eqref{upper bound of bm} we have
\begin{align*}
\left|D_w^{\gamma}D_{\bar{w}}^{\delta}b_m(w, \bar{w})\right|
\frac{}{}
\leq C'^{|\gamma|+|\delta|+m}\gamma!\delta!m!,
\end{align*}
where $C'=4MC$. This in particular shows that the holomorphic extensions $b_m(x, \bar{y})$ exist on a uniform neighborhood of the diagonal $y=x$ whose size is controlled by $C'$ which is independent of $m$.

 Obviously \eqref{upper bound of bm} holds for $m=0$ and any $\gamma,\delta\geq 0$, since $b_0=1$ on $U$. Assume that \eqref{upper bound of bm} holds up to $m-1$ and we proceed to $m$. By \eqref{recursive inequality}, we have
\begin{align}\label{bm 1}
\begin{split}
	\frac{\left|D_w^{\gamma}D_{\bar{w}}^{\delta}b_m\right|}{(2C)^{|\gamma+\delta|}M^m\gamma!\delta!}
	\leq 
	\sum_{j=1}^m\sum_{\nu=j}^{3j}j!
	&\sum_{\substack{\gamma_4\leq \gamma, \delta_4\leq \delta}}2^{-|\gamma+\delta|+|\gamma_4+\delta_4|}(2C^2/M)^j
	\\
	&\cdot\sum_{|\alpha|=|\beta|=\nu} \sum_{\substack{\beta_2\leq \beta\\ |\beta_2|\leq j}} \binom{m-j+|\beta_2+\delta_4|}{|\beta_2+\delta_4|}\binom{\beta_2+\delta_4}{\beta_2}(m-j)!.
\end{split}
\end{align}
We have the combinatorial inequality 
\begin{equation*}
	\binom{\beta_2+\delta_4}{\beta_2}\leq \binom{|\beta_2+\delta_4|}{|\beta_2|},
\end{equation*}
and the combinatorial identity
\begin{equation*}
	\binom{m-j+|\beta_2+\delta_4|}{|\beta_2+\delta_4|}\binom{|\beta_2+\delta_4|}{|\beta_2|}
	=\binom{m-j+|\beta_2+\delta_4|}{|\delta_4|}\binom{m-j+|\beta_2|}{|\beta_2|}.
\end{equation*}
Observe that, since $|\beta_2|\leq j$ and $|\delta_4|\leq |\delta|$, we have
\begin{equation*}
\binom{m-j+|\beta_2+\delta_4|}{|\beta_2+\delta_4|}\binom{|\beta_2+\delta_4|}{|\beta_2|}
\leq \binom{m+|\delta|}{|\delta|}\binom{m}{j}.
\end{equation*}
Plugging this into \eqref{bm 1}, we obtain
\begin{align*}
	\frac{\left|D_w^{\gamma}D_{\bar{w}}^{\delta}b_m\right|}{(2C)^{|\gamma+\delta|}M^m\gamma!\delta!}
	\leq \binom{m+|\delta|}{|\delta|}m!
	\sum_{j=1}^m\sum_{\nu=j}^{3j}
	\sum_{\substack{\gamma_4\leq \gamma, \delta_4\leq \delta}}2^{-|\gamma+\delta|+|\gamma_4+\delta_4|}(2C^2/M)^j
	\sum_{|\alpha|=|\beta|=\nu} \#\{\beta_2:\beta_2\leq \beta\}.
\end{align*}
Clearly,
\begin{align*}
	\sum_{|\alpha|=|\beta|=\nu} \#\{\beta_2:\beta_2\leq \beta\}
	\leq \sum_{|\alpha|=|\beta|=\nu}2^{|\beta|}=\binom{\nu+n-1}{n-1}^22^{\nu}\leq 2^{3\nu+2n-2}.
\end{align*}
It follows that
\begin{align*}
\frac{\left|D_w^{\gamma}D_{\bar{w}}^{\delta}b_m\right|}{(2C)^{|\gamma+\delta|}M^m\gamma!\delta!}
&\leq \binom{m+|\delta|}{|\delta|}m!
\sum_{j=1}^m (2C^2/M)^j\sum_{\nu=j}^{3j}
 2^{3\nu+2n-2} \sum_{\gamma_4\leq \gamma}2^{-|\gamma|+|\gamma_4|} \sum_{ \delta_4\leq \delta}2^{-|\delta|+|\delta_4|}
\\
&\leq \binom{m+|\delta|}{|\delta|}m!
\sum_{j=1}^m (2C^2/M)^j\sum_{\nu=j}^{3j}
2^{3\nu+4n-2}
\\
&\leq \binom{m+|\delta|}{|\delta|}m!
2^{4n}\sum_{j=1}^m (2^{10}C^2/M)^j.
\end{align*}
By letting $M=2^{4n+11}C^2$, we  have $2^{4n}\sum_{j=1}^m (2^{10}C^2/M)^j\leq 1$ and
\begin{align*}
	\left|D_w^{\gamma}D_{\bar{w}}^{\delta}b_m\right|
	\frac{}{}
	\leq \binom{m+|\delta|}{|\delta|}m!(2C)^{|\gamma+\delta|}M^m\gamma!\delta!.
\end{align*}
Therefore the induction is concluded.

\subsection{Proof of Theorems \ref{Main} and \ref{complete asymptotics}} The proofs follow from Theorem \ref{MainLemma} in a non-trivial way. However these conclusions have already been carried out in \cite{HLXanalytic} (Remark 1.7).  

\section*{Acknowledgements}
In the final stage of the preparation of this paper S. Zelditch brought to our attention a recent arXiv preprint of L. Charles \cite{Ch19} which uses the Toeplitz calculus to prove that the Bergman kernel has an analytic symbol. Although the tools used in \cite{Ch19} are seemingly different from ours, the heart of the proof is similar as it uses a formula obtained from the reproducing property of the Bergman kernel (see Lemma 3.2 of \cite{Ch19}) . We are grateful to L. Charles and M. Engli\v s for their valuable comments on the literature. 


\begin{thebibliography}{HHHH}


\bibitem[BeBeSj08]{BBS}
	Berman, R., Berndtsson, B., Sj\"ostrand, J.,
	\emph{A direct approach to Bergman kernel asymptotics for positive line bundles}.
	Ark. Mat. \textbf{46(2)}, 197--217 (2008). 


\bibitem[Ber08]{Ber}
Berman, R.,
\emph{Sharp asymptotics for {T}oeplitz determinants and convergence
towards the {G}aussian free field on {R}iemann surfaces}.
Int. Math. Res. Not. \textbf{22}, 5031--5062 (2012).

	
\bibitem[BlShZe00]{BSZ} Bleher, P., Shiffman, B., Zelditch, S., \emph{Universality and scaling of correlations between zeros on complex manifolds}.
Invent. Math. \textbf{142} (2), 351--395 (2000).


\bibitem[BoSj75]{BoSj} Boutet de Monvel, L.,  Sj\"ostrand, J.
\emph{Sur la singularit\'e des noyaux de Bergman et de Szeg\"o}. \'Equations aux D\'eriv\'ees Partielles de Rennes, Asterisque \textbf{34-35}, 123--164 (1976), Soc. Math. France, Paris.


\bibitem[Cal53]{Cal} Calabi, E.,
\emph{Isometric imbedding of complex manifolds.}
Ann. of Math. \textbf{58} (2), 1--23 (1953).

\bibitem[Ca99]{Ca}
	Catlin, D.,
	\emph{The Bergman kernel and a theorem of Tian. Analysis and Geometry in Several Complex Variables}, Katata,
	Trends Math., 1--23. Birkh\"auser, Boston (1999).
	
	\bibitem[Cha00]{Cha00} Charles, L.,  \emph{Semi-classical aspects of Geometric quantization}, PhD-Thesis, University Paris-Dauphine, 2000. 
	
	\bibitem[Cha03]{Cha03} Charles, L.,  \textit{Berezin-Toeplitz operators, a semi-classical approach}, Comm. Math. Phys. \textbf{239}, no. 1--2, 1–28 (2003). 
	
\bibitem[Cha19]{Ch19}	Charles, L., \emph{Analytic Berezin-Toeplitz operators}, arXiv: 1912.06819, 2019. 


\bibitem[Ch03]{Ch} Christ, M.,
\emph{Slow off-diagonal decay for Szeg\"o kernels associated to smooth Hermitian line bundles.} Harmonic analysis at Mount Holyoke (South Hadley, MA, 2001), 77--89,
Contemp. Math., \textbf{320}, Amer. Math. Soc., Providence, RI, 2003.

\bibitem[Ch13a]{Ch2} Christ, M., \emph{Upper bounds for Bergman kernels associated to positive line bundles with smooth Hermitian metrics}, unpublished (2013), arXiv:1308.0062. 

\bibitem[Ch13b]{Ch3} Christ, M., \emph{Off-diagonal decay of Bergman kernels: On a conjecture of Zelditch}, unpublished (2013), arXiv:1308.5644. 

\bibitem[DaLiMa06]{DLM}  Dai, X., Liu, K., Ma, X., \emph{On the asymptotic expansion of {B}ergman kernel.} J. Differential Geom. \textbf{72}, 1--41(2006).

\bibitem[Do01]{Do} Donaldson, S. K. \emph{Scalar curvature and projective embeddings. I}, J. Differential Geom. \textbf{59}(3), 479--522 (2001).

\bibitem[De2018a]{Deleporte} Deleporte, A., \emph{Toeplitz operators with analytic symbols}, arXiv: 1812.07202, 2018. 

\bibitem[De2018b]{Deleporte2} Deleporte, A., \emph{The Bergman kernel in constant curvature}, arXiv:1812.06648, 2018. 

\bibitem[De98]{Del} Delin, H., \emph{Pointwise estimates for the weighted Bergman projection kernel in $\mathbb C^n$, using a weighted $L^2$ estimate for the $\bar \partial$ equation.} Ann. Inst. Fourier (Grenoble), \textbf{48}(4), 967--997 (1998).

\bibitem[En00]{En}  Engli\v s, M., \emph{The asymptotics of a Laplace integral on a K\"ahler manifold.} J. Reine Angew. Math. \textbf{528}, 1--39 (2000).

\bibitem[Fe74]{Fe} Fefferman, C.,
	\emph{The Bergman kernel and biholomorphic mappings of psuedoconvex domains.}
	Invent. Math. \textbf{26}, 1--66 (1974).


\bibitem[HeKeSeXu16]{HKSX} Hezari, H., Kelleher, C., Seto, S., Xu, H., \emph{Asymptotic expansion of the Bergman kernel via perturbation of the Bargmann-Fock model,} Journal of Geometric Analysis, \textbf{26}(4), 2602--2638 (2016). 

\bibitem[HeLuXu18]{HLXanalytic}
Hezari, H., Lu, Z., and Xu, H.,
\emph{Off-diagonal asymptotic properties of Bergman kernels associated to
	analytic K\"ahler potentials},
\newblock {\em International Mathematics Research Notices} (2018).

\bibitem[HeXu18]{HX} Hezari, H., Xu, H., \emph{Quantitative upper bounds for Bergman kernels associated to smooth \k potentials,} arXiv:1807.00204 (2018), to appear in Mathematical Research Letters 2020.  


\bibitem[Hor03]{Ho2}
H\"ormander, L., \emph{The analysis of linear partial differential operators I.} Classics in Mathematics. Springer-Verlag, Berlin, 2003. Distribution theory and Fourier analysis, Reprint of the second (1990) edition.






\bibitem[Liu10]{Liu}
	Liu, C.-J.,
	\emph{The asymptotic {T}ian-{Y}au-{Z}elditch expansion on {R}iemann surfaces with constant curvature},
	Taiwanese J. Math., 1665--1675 (2010). 

	
 
\bibitem[Lo04]{Loi} Loi, A.,  \emph{The Tian-Yau-Zelditch asymptotic expansion for real analytic K\"ahler metrics.} Int. J. Geom. Methods Mod. Phys. \textbf{1}(3), 253--263 (2004).

\bibitem[Lu00]{Lu}
	Lu, Z.,
 	\emph{On the Lower Order Terms of the Asymptotic Expansion of Tian-Yau-Zelditch},
	American Journal of Mathematics, \textbf{122}(2), 235--273 (2000). 


\bibitem[LuSh15]{LuSh} Lu, Z., Shiffman, B.,
	\emph{Asymptotic Expansion of the Off-Diagonal Bergman Kernel on Compact \k Manifolds},
Journal of Geometric Analysis, \textbf{25}(2), 761--782 (2015). 

\bibitem[LuTi04]{LuTian}
	Lu, Z., Tian, G.,
 	\emph{The log term of the {S}zeg\"o kernel},
	Duke Math. J., \textbf{125}(2), 351--387 (2004).
	

\bibitem[MaMa07]{MaMaBook} Ma, X. and  Marinescu, G., \emph{ Holomorphic Morse inequalities and Bergman kernels}, Progress in Math., \textbf{254}, Birkh\"auser, Basel, 2007.

\bibitem[MaMa08]{MaMa}
	Ma, X., Marinescu, G.,
 	\emph{Generalized {B}ergman kernels on symplectic manifolds}, Adv. Math., \textbf{217}(4), 1756-1815 (2008).


\bibitem [MaMa13]{MaMaOff} Ma, X., Marinescu, G., \emph{Remark on the Off-Diagonal Expansion of the Bergman Kernel on Compact K\"ahler Manifolds}, Communications in Mathematics and Statistics, \textbf{1}(1), 37--41 (2013).


\bibitem[MaMa15]{MaMaAgmon} Ma, X., Marinescu, G. \emph{Exponential estimate for the asymptotics of Bergman kernels.}
Math. Ann. \textbf{362}(3-4), 1327--1347 (2015). 

\bibitem[RoSjNg18]{RSN} Rouby, O., Sj\"ostrand, J., Ngoc, S., \emph{Analytic Bergman operators in the semiclassical limit,} arXiv:1808.00199v1 (2018). 




\bibitem[ShZe02]{ShZe}
Shiffman, B., Zelditch, S.,
\emph{Asymptotics of almost holomorphic sections of ample line bundles on symplectic manifolds}, J. Reine Angew. Math. \textbf{544}, 181-222 (2002). 



\bibitem[Xu12]{Xu} Xu, H.,
\emph{A closed formula for the asymptotic expansion of the Bergman kernel.}
Comm. Math. Phys. \textbf{314}(3), 555--585 (2012).

\bibitem[YuZh16]{YZ} Yuan, Y., Zhu, J.
\emph{Holomorphic line bundles over a tower of coverings.}
J. Geom. Anal. \textbf{26}(3), 2013--2039 (2016).



\bibitem[Ze98]{Ze1}
	Zelditch, S., \emph{Szeg\"{o} kernels and a theorem of Tian}, Internat. Math. Res. Notices \textbf{6}, 317--331 (1998). 

\bibitem[Ze09]{ZeBookReview} Zelditch, S. \emph{Book review of "Holomorphic Morse inequalities and Bergman kernels" (by Xiaonan Ma and George Marinescu)}. Bulletin of the American Mathematical Society \textbf{46}, 349--361 (2009).

\bibitem[Ze14]{Zeemail} Zelditch, S., personal communication, (2014). 
	
\bibitem[Ze16]{Ze3}  Zelditch, S.,  \emph{Off-diagonal decay of toric Bergman kernels}, Lett. Math. Phys. \textbf{106}(12), 1849--1864 (2016). Volume in Memory of Louis Boutet de Monvel. 



\end{thebibliography}

\end{document}